\documentclass[a4paper,10pt]{article}
\usepackage{amssymb,amsmath,amsthm}
\usepackage{fullpage}
\usepackage{hyperref}
\usepackage{eucal}
\usepackage{color}
\usepackage{stackrel}
\usepackage{graphicx}
\newcommand{\ie}{\emph{i.e.}}
\newcommand{\eg}{\emph{e.g.}}

\newcommand{\cD}{\mathcal{D}}

\newcommand{\Real}{\mathbb{R}}
\newcommand{\C}{\mathbb{C}}

\newcommand{\Dom}{\mathsf{D}}

\newcommand{\sii}{L^2}

\newcommand{\der}{\mathrm{d}}

\DeclareMathOperator{\Sp}{Sp}

\newenvironment{psmallmatrix}
  {\left(\begin{smallmatrix}}
  {\end{smallmatrix}\right)}
\begingroup
    \makeatletter
    \@for\theoremstyle:=definition,remark,plain\do{%
        \expandafter\g@addto@macro\csname th@\theoremstyle\endcsname{%
            \addtolength\thm@preskip\parskip
            }%
        }
\endgroup
\newtheorem{Theorem}{Theorem}
\newtheorem{Lemma}{Lemma}
\newtheorem{Proposition}{Proposition}
\newtheorem{Corollary}{Corollary}
\newtheorem{Conjecture}{Conjecture}
\theoremstyle{definition}
\newtheorem{Remark}{Remark}

%
%\numberwithin{equation}{section}
%  
\def\OMIT#1{}
%
% for remarks
%
\usepackage[normalem]{ulem}
\definecolor{DarkGreen}{rgb}{0,0.5,0.1} % David
\newcommand{\txtD}{\textcolor{DarkGreen}}

\newcommand\soutD{\bgroup\markoverwith
{\textcolor{DarkGreen}{\rule[.5ex]{2pt}{1pt}}}\ULon}
\newcommand\soutP{\bgroup\markoverwith
{\textcolor{blue}{\rule[.5ex]{2pt}{1pt}}}\ULon}
\newcommand{\Hm}[1]{\leavevmode{\marginpar{\tiny%
$\hbox to 0mm{\hspace*{-0.5mm}$\leftarrow$\hss}%
\vcenter{\vrule depth 0.1mm height 0.1mm width \the\marginparwidth}%
\hbox to
0mm{\hss$\rightarrow$\hspace*{-0.5mm}}$\\\relax\raggedright #1}}}

\begin{document}
%
%-------%
% TITLE %
%-------%
%------------------------------------------%
%------------------------------------------%
\title{\textbf{\LARGE
Spectral inequality for Dirac right triangles 
}}
\author{Tuyen Vu}
\date{\small 
\begin{quote}
\begin{center}
Department of Mathematics, Faculty of Nuclear Sciences and 
Physical Engineering, \\ Czech Technical University in Prague, 
Trojanova 13, 12000 Prague 2, Czechia. \\
E-mail: thibichtuyen.vu@fjfi.cvut.cz.
\end{center}
\end{quote}
25 February 2023
}
\maketitle
\vspace{-5ex} 
\begin{abstract}
\noindent
We consider the Dirac operator on right triangles,
subject to infinite-mass boundary conditions.
We conjecture that the lowest positive eigenvalue 
is minimised by the isosceles right triangle 
both under the area or perimeter constraints.
We prove this conjecture under extra geometric hypotheses 
relying on a recent approach of Ph.~Briet and D.~Krej{\v{c}}i{\v{r}}{\'i}k
for Dirac rectangles~\cite{2DK}.
\end{abstract}
%
%------------------------------------------%
%------------------------------------------%

%\noteD{The university does not allow
%for multiple affiliations.}%

%---------------------%
\section{Introduction}
%---------------------%
%
One of the most interesting topics in spectral geometry 
is the determination of optimal shapes for eigenvalues of differential operators,
subject to various boundary conditions and geometric constraints.
Probably the most classical and well known situation is that 
of the Laplace operator, subject to Dirichlet boundary conditions:
\begin{equation}\label{Dirichlet} 
\left\{
\begin{aligned}
  -\Delta \psi &= \Lambda \psi
  && \mbox{in} && \Omega 
  \,,
  \\
  \psi &= 0 
  && \mbox{on} && \partial\Omega 
  \,,
\end{aligned}
\right.
\end{equation}
where $\Omega$ is an open set of finite measure.
The celebrated Faber--Krahn inequality states that
the lowest eigenvalue $\Lambda_1 = \Lambda_1(\Omega)$
is minimised by the ball, among all sets of given volume. 
By the classical isoperimetric inequality, 
it follows that the ball is the minimiser under the perimeter constraint too.
The optimality of the ball extends to repulsive Robin boundary conditions,
but it is generally false for attractive Robin boundary conditions~\cite{FK7,AFK}.
The ball is generally not optimal for higher eigenvalues either.
Mathematically, the optimality of the ball is closely related
to the availability of symmetrisation techniques.
We refer to the monographs \cite{Henrot,Henrot2} for a recent survey
of this fascinating spectral-optimisation subject.

By a symmetrisation argument, it is also true that 
the Dirichlet eigenvalue $\Lambda_1$ is minimised 
by the equilateral triangle (respectively, square),
among all triangles (respectively, quadrilaterals)
of a given area or perimeter. 
The analogous problem remains open for general polygons,
see \cite[Sec.~3.3.3]{Henrot} and \cite{Bogosel-Bucur,Indrei}
for a survey and the most recent progresses, respectively.
In general, it also remains open for Robin boundary conditions,
even in the case of triangles~\cite{KLV}.
On the other hand, rectangles (or, more generally, rectangular boxes), 
the very special situation of quadrilaterals,
can be settled by means of the availability of explicit solutions
due to the separation of variables~\cite{Laugesen_2019}.

The classical physical interpretation of~$\Lambda_1$ 
in two dimensions is the square 
of the fundamental frequency of a vibrating membrane with fixed edges.
Alternatively, 
$\Lambda_1$ is the ground-state energy of a non-relativistic
quantum particle constrained to a semiconductor nanostructure
of shape~$\Omega$ by hard-wall boundaries.  
In this paper, we are interested in analogues 
of the aforementioned spectral-optimisation
problems in the relativistic setting.

The relativistic analogue of~\eqref{Dirichlet} 
(relevant for graphene materials, for instance)
is the spectral problem for the Dirac operator,
subject to infinite-mass (also called MIT) boundary conditions
\cite{Benguria-Fournais-Stockmeyer-Bosch_2017b,
Arrizibalaga-LeTreust-Raymond_2017,
LeTreust-Ourmieres-Bonafos_2018, 
Arrizibalaga-LeTreust-Raymond_2018,
Barbaroux-Cornean-LeTreust-Stockmeyer_2019,
Arrizibalaga-LeTreust-Mas-Raymond_2019}.
More specifically, given an open Lipschitz set~$\Omega$ in~$\mathbb{R}^2$,
the relativistic quantum Hamiltonian acts as 
\begin{equation}\label{operator1}
  T := 
  \begin{pmatrix}
    m & -i (\partial_1-i\partial_2) \\
    -i(\partial_1+i\partial_2) & -m
  \end{pmatrix}
  \qquad \mbox{in} \qquad
  \sii(\Omega;\C^2)
  \,,
\end{equation}
while the boundary conditions are encoded in the operator domain 
 \begin{equation}\label{operator2}
  \Dom(T) :=  
  \left\{
  \psi = 
  \begin{psmallmatrix}
  \psi_1 \\ \psi_2 
  \end{psmallmatrix}
  \in W^{1,2}(\Omega;\C^2) : \ \psi_2 = i (n_1 + i n_2) \psi_1
  \mbox{ on } \partial\Omega
  \right\}
  .
\end{equation}
Here the notations $m$ and $n= 
  \begin{psmallmatrix}
  n_1 \\ n_2 
  \end{psmallmatrix}
  :\partial\Omega\to\Real^2$ 
  stand for the non-negative mass of the relativistic \mbox{(quasi-)}particle 
  and the outward unit normal of the set~$\Omega$, respectively. 
The operator~$T$ is self-adjoint, at least if
the boundary~$\partial\Omega$ is $C^2$-smooth    
\cite{Benguria-Fournais-Stockmeyer-Bosch_2017b} 
or if~$\Omega$ is a polygon 
\cite{LeTreust-Ourmieres-Bonafos_2018}
(for a general Lipschitz set, the self-adjointness can be achieved
in a $W^{1/2,2}$ setting \cite{Behrndt}).
As usual in relativistic quantum mechanics, 
the spectrum of~$T$ is not bounded from below.   
However, it is purely discrete if~$\Omega$ is bounded   
and the eigenvalues are symmetrically distributed on the real axis.
Consequently, 
the lowest positive eigenvalue $\lambda_1 = \lambda_1(\Omega)$
of~$T$ can be characterised variationally:
\begin{equation}\label{eq2}  
  \lambda_1(\Omega)^2 
  = \inf_{\stackrel[\psi \not= 0]{}{\psi \in \Dom(T)}} 
  \frac{\|T \psi\|^2}{\|\psi\|^2}  
  \,.
\end{equation}
It is important to stress that, 
because of the exotic boundary conditions,
spinorial structure of the Hilbert space 
and lack of positivity-preserving property,
no symmetrisation techniques are available at this moment.

In analogy with the Faber--Krahn inequality,
the following conjecture is natural to expect
in the relativistic setting.
\begin{Conjecture}\label{Conj}
Given any $m \geq 0$ 
and open Lipschitz set $\Omega \subset \Real^2$,
\begin{center}
$ \lambda_1(\Omega) \geq \lambda_1(\Omega^*)$
 \end{center}
where $\Omega^*$ is the disk of the same area or perimeter as~$\Omega$.
\end{Conjecture}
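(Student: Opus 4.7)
The natural starting point is the well-known ``squaring trick'' for the Dirac operator with infinite-mass boundary conditions. For $\psi \in \Dom(T)$ with $\partial\Omega$ sufficiently regular, integration by parts in the Rayleigh quotient~\eqref{eq2} produces an identity of the schematic form
\[
\|T \psi\|^2 \;=\; \|\nabla \psi\|^2 + m^2 \|\psi\|^2 + \int_{\partial\Omega} \kappa \, |\psi_1|^2 \, \der s,
\]
where $\kappa$ denotes the signed geodesic curvature of $\partial\Omega$ (concentrated at vertices in the polygonal case). This recasts the spectral problem as a scalar variational problem with a curvature-weighted boundary term; an approximation argument reduces the general Lipschitz case to smooth or polygonal domains, where the identity makes classical sense.

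With the identity in hand, the plan is to run a Faber--Krahn-type argument on the bulk and an isoperimetric argument on the boundary, both pointing to the disk. For the bulk terms $\|\nabla \psi\|^2 + m^2 \|\psi\|^2$, one would attempt to apply the symmetric-decreasing rearrangement (P\'olya--Szeg\H{o} plus rearrangement invariance of the $\sii$ norm) to pass from $\Omega$ to~$\Omega^*$ without increasing the quotient, working on a scalar surrogate of $\psi$ such as $|\psi|$ or $|\psi_1|$. For the curvature integral, one would combine the classical isoperimetric inequality with a trace estimate controlling $\psi_1$ on $\partial\Omega$ by the bulk $W^{1,2}$-norm, and then compare with the explicit disk computation (where $\kappa$ is a positive constant $1/R$ and separation of variables diagonalises~$T$).

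The fundamental obstacle---which is precisely why Conjecture~\ref{Conj} remains open, as the author acknowledges in the discussion preceding the statement---is the incompatibility of scalar rearrangement with the spinorial and first-order structure of~$T$. The boundary condition in~\eqref{operator2} couples $\psi_1$ and $\psi_2$ through the outward normal~$n$ itself, so neither component can be rearranged independently; and rearranging the modulus $|\psi|$ destroys the Cauchy--Riemann-type relations encoded in the off-diagonal entries of~$T$, so no direct analogue of the P\'olya--Szeg\H{o} inequality is available. Moreover, the boundary curvature is of indefinite sign on non-convex sets, so even a successful bulk step would not close the argument: the possibly negative boundary contribution on a general $\Omega$ must still be controlled against the uniformly positive one on the disk.

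Given these obstructions, I would not expect a full proof in this direction; instead, the realistic path---and the one the present paper takes---is to fix a restricted symmetry class (here, right triangles), parametrise it explicitly, and compare within the family along the lines of Briet and Krej{\v{c}}i{\v{r}}{\'i}k~\cite{2DK} for rectangles. A full resolution of Conjecture~\ref{Conj} appears to require a genuinely two-spinor symmetrisation technique that is not currently available, and which would likely be the main theoretical contribution of any eventual proof.
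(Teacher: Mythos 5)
The statement you were asked to address is Conjecture~\ref{Conj}, which the paper explicitly presents as an open problem: no proof is given anywhere in the text, and the author's own contribution is confined to the restricted family of right triangles under additional eccentricity hypotheses (Theorem~\ref{Thm.bounds} and Corollary~\ref{Corol}). Your submission correctly recognises this. You do not claim a proof; you sketch the natural line of attack (the squaring identity plus symmetric-decreasing rearrangement) and then identify why it collapses --- the boundary condition in~\eqref{operator2} couples the spinor components through the normal vector, the modulus of a spinor cannot be rearranged without destroying the first-order structure of~$T$, and the curvature term has indefinite sign on non-convex sets. This diagnosis agrees with the paper's own remarks that ``because of the exotic boundary conditions, spinorial structure of the Hilbert space and lack of positivity-preserving property, no symmetrisation techniques are available at this moment,'' and your suggested fallback (work within an explicitly parametrised symmetry class and compare along the lines of~\cite{2DK}) is exactly the strategy the paper adopts. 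So there is no proof to compare against, and your text is an accurate assessment rather than a gap-ridden argument.

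Two small corrections to your schematic identity: the paper's formula~\eqref{curvature} carries the boundary term $-\frac{1}{2}\int_{\partial\Omega}\kappa\,|\gamma u|^2\,ds$ with the convention $\kappa<0$ on convex sets (so the sign of your curvature integral is reversed relative to the paper's convention, and the factor $\frac{1}{2}$ is missing), and it also contains the mass boundary term $m\,\|\gamma u\|^2$, which survives even in the polygonal case where the curvature contribution vanishes on the open edges (Theorem~\ref{formula}). Neither point affects your overall conclusion, but the $m\,\|\gamma u\|^2$ term is precisely what the paper exploits in its lower bounds, so it should not be dropped from any future attempt.
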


For massless particles (\ie~$m=0$), the fixed-area part of the conjecture
was explicitly stated 
in~\cite{Antunes-Benguria-Lotoreichik-Ourmires-Bonafos_2021}.
The present general statement can be found in \cite{2DK}. 
The proof of the conjecture was classified as a challenging open problem
in spectral geometry during an AIM workshop in San Jose (USA) 
in 2019 \cite{AIM-2019}.
Unfortunately, despite some partial attempts
\cite{Benguria-Fournais-Stockmeyer-Bosch_2017,
Lotoreichik-Ourmieres_2019,
Antunes-Benguria-Lotoreichik-Ourmires-Bonafos_2021},
including a numerical support, 
the problem remains open. 
 
Because of the complexity of the problem in the general setting,
the authors of~\cite{2DK} considered a rectangular version 
of the conjecture.
More specifically, it is conjectured in~\cite{2DK} 
that~$\lambda_1$ is minimised by the square among 
all rectangles of a fixed area or perimeter.
Surprisingly, even this simplified setting is not resolved
and the authors of~\cite{2DK} managed to prove the conjecture 
under some additional hypotheses only
(roughly, for heavy masses or eccentric rectangles). 
The problem is that the infinite-mass boundary conditions
do not allow for a separation of variables.

In this paper, we continue the study by asking whether 
the isosceles right triangle is the optimal geometry
among all right triangles,
again both under the area or perimeter constraints.
More specifically, let $\Omega_{a,b}$ be the right triangle in~$\Real^2$
defined by the three vertexes 
$O:=(0,0)$, $A:=(a,0)$ and $B:=(0,b)$,
where $a,b$ are any positive numbers,
see Figure~\ref{Fig}. 
Note that the area and perimeter are given by
$\frac{1}{2}ab$ and $a+b + \sqrt{a^2+ b^2}$, respectively.
Define $\lambda_1(a,b) := \lambda(\Omega_{a,b})$.

\begin{Conjecture}\label{Conj.main}
Given any $m \geq 0$,
\begin{enumerate}
\item[\emph{(i)}]
$\lambda_1(a,b) \geq \lambda_1(k,k)$ 
with any $a,b,k >0$ such that $ab= k^2$,
%\hfill (area constraint)
\item[\emph{(ii)}]
$\lambda_1(a,b) \geq \lambda_1(k,k)$
with any $a \in (0,(2+\sqrt{2})k)$ and $b,k >0$ such that
$a+b + \sqrt{a^2 +b^2}= (2+\sqrt{2})k$.
%\hfill (perimeter constraint)
\end{enumerate}
\end{Conjecture}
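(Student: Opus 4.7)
My plan is to adapt the Briet--Krej{\v{c}}i{\v{r}}{\'i}k strategy of~\cite{2DK} from rectangles to right triangles, exploiting the Cartesian alignment of the two legs while treating the hypotenuse as the unavoidable source of difficulty. The core task is part~(i), the area-constrained inequality; part~(ii) I would then deduce from (i) by observing that within the class of right triangles, a fixed perimeter $(2+\sqrt{2})k$ is maximised in area by the isosceles right triangle $\Omega_{k,k}$, so that any competitor $\Omega_{a,b}$ of the same perimeter has area $ab/2\le k^2/2$ and can be compared through part~(i) to the smaller isosceles triangle $\Omega_{\sqrt{ab},\sqrt{ab}}$; combining with the monotonicity of $\lambda_1(k,k)$ in~$k$ (trivial for $m=0$ by scaling and reducible to a comparison of quadratic forms for $m>0$) yields the conclusion.

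For part~(i), I use the variational principle~\eqref{eq2} and aim to produce, starting from the ground state $\psi$ on $\Omega_{a,b}$, a trial spinor on $\Omega_{k,k}$ whose Rayleigh quotient is bounded above by $\lambda_1(a,b)^2$. The natural candidate is the pullback through the area-preserving affine map $\Phi(x,y):=(kx/a,\,ky/b)$, which sends $\Omega_{a,b}$ bijectively onto $\Omega_{k,k}$. Under $\Phi$, the two legs are mapped to themselves and their outward unit normals are preserved, so the infinite-mass conditions $\psi_2=\psi_1$ on $\{y=0\}$ and $\psi_2=-i\psi_1$ on $\{x=0\}$ transport to the trial function verbatim. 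The real difficulty lies on the hypotenuse: the outward normal rotates from $(b,a)/\sqrt{a^2+b^2}$ on $\Omega_{a,b}$ to $(1,1)/\sqrt{2}$ on $\Omega_{k,k}$, so the pulled-back spinor satisfies the wrong boundary condition there. I would remedy this by premultiplying by a smooth unitary gauge $U(x,y)$ that equals the identity near the right-angle vertex (to protect the leg conditions) and precisely intertwines the two boundary spin projections along the hypotenuse.

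The remaining steps would be to (a) verify that the gauged trial spinor belongs to $\Dom(T)$ on $\Omega_{k,k}$; (b) expand its Rayleigh quotient using the standard quadratic-form identity
\[
  \|T\varphi\|^2 = \int_{\Omega}\!\bigl(|\nabla\varphi|^2 + m^2|\varphi|^2\bigr)\,\der x + \mbox{boundary terms}
\]
obtained by integration by parts together with the infinite-mass condition; and (c) re-express the outcome as $\lambda_1(a,b)^2\|\psi\|^2$ plus three error contributions, namely an anisotropic Jacobian distortion of the bulk gradient term, a bulk piece controlled by $|\nabla U|^2$, and a hypotenuse boundary remainder. The hard part, and the reason the unconditional conjecture appears out of reach by this route, is to show that the sum of these errors is non-positive. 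The Jacobian distortion weights $\|\partial_1\psi\|^2$ and $\|\partial_2\psi\|^2$ by the unequal factors $(a/k)^2$ and $(b/k)^2$, and without extra information on~$\psi$ these weights cannot be freely traded against each other. As in~\cite{2DK}, I therefore expect the argument to close only in three regimes: when $a/b$ is close to~$1$, treated perturbatively; when~$m$ is sufficiently large, so that the $m^2\|\psi\|^2$ contribution dominates and absorbs the errors; and in the elongated limit $a/b\to\infty$, where the problem decouples into a one-dimensional Dirac operator on an interval whose ground-state profile is explicit. These should be exactly the extra geometric hypotheses announced in the abstract, while their removal --- and thus the full proof of Conjecture~\ref{Conj.main} --- remains the principal obstacle, inherited directly from the rectangular case in~\cite{2DK}.
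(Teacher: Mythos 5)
First, a point of framing: the statement you are proving is stated in the paper as a \emph{conjecture}, and the paper does not prove it either; it only establishes it under extra eccentricity hypotheses ($a\geq 9k$ or $a\leq k/9$ for the area constraint, $a\geq 3.5k$ or $a\leq k/9$ for the perimeter constraint). Your proposal is likewise not a proof but a programme, and you are candid that it closes only in special regimes. The genuine gaps in your programme, however, are more serious than you indicate. The central one is the gauge transformation $U$. You require $U$ to preserve the spin subspaces $\{\psi_2=\psi_1\}$ and $\{\psi_2=-i\psi_1\}$ along the \emph{entire} legs $OA$ and $OB$ (not merely near the right-angle vertex) while rotating $\{\psi_2=\frac{-a+ib}{\sqrt{a^2+b^2}}\psi_1\}$ into $\{\psi_2=\frac{-1+i}{\sqrt{2}}\psi_1\}$ along the hypotenuse; at the vertices $A$ and $B$ these two requirements meet and are generically incompatible for a continuous $U$, so $\nabla U$ must develop a corner singularity whose membership in the right class (and whose contribution to the form) is exactly the kind of regularity issue the paper has to fight even to justify integration by parts (Lemma~\ref{Lem.polygon}). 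Moreover, since $T$ is first order, $T(U\varphi)$ produces cross terms of the schematic form $\Re\bigl(\sigma\cdot(\nabla U)\varphi,\,U\,\sigma\cdot\nabla\varphi\bigr)$ that are of the \emph{same order} as the main term and of no definite sign; your error budget (an anisotropic Jacobian distortion, a bulk $|\nabla U|^2$ piece, a hypotenuse remainder) omits them. Finally, your reduction of (ii) to (i) needs both the full strength of (i) and the monotonicity of $\lambda_1(\ell,\ell)$ in $\ell$ for $m>0$; the latter is plausible from the form identity \eqref{norm} but is asserted, not proved, and the former is precisely what is unavailable.

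For comparison, the paper's partial results come from an entirely different mechanism, with no transplantation and no gauge field. It first proves the polygonal analogue of the smooth-domain identity \eqref{curvature}, namely $\|T_{a,b}u\|^2=\|\nabla u\|^2+m^2\|u\|^2+m\|\gamma u\|^2$ (Theorem~\ref{formula}, via the density Lemma~\ref{Lem.polygon}). It then slices the triangle by Fubini in each coordinate direction and applies a one-dimensional Poincar\'e-type inequality for a Dirac operator on an interval with the induced boundary conditions (Lemma~\ref{Poincare}), obtaining the explicit, $m$-uniform lower bound of Theorem~\ref{Thm.bounds}; the matching upper bound for the isosceles triangle comes from the Dirichlet trial function $\psi_o$. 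The conjecture then holds whenever the lower bound for $\Omega_{a,b}$ exceeds the upper bound $\frac{5\pi^2}{2}(k^{-2}+k^{-2})$ for $\Omega_{k,k}$, which is what produces the numerical eccentricity thresholds of Corollary~\ref{Corol}. Notably, this route gives the conjecture for \emph{eccentric} triangles and arbitrary $m\geq 0$, whereas your three regimes (near-isosceles, large mass, elongated limit) are neither established by your argument nor the ones the paper actually obtains.
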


We have not managed to prove the conjecture in its full generality. 
Following~\cite{2DK}, to get partial results,
we first establish universal lower and upper bounds to $\lambda_1(a,b)$.  
\begin{Theorem}\label{Thm.bounds}
%\noteD{The lower bound is bad, for independent of~$m$.}%
For every $m \geq 0$, one has
\begin{equation*} 
   \frac{\arctan^2(\frac{a}{b} +\sqrt{1+\frac{a^2}{b^2}})}{b^2} + \frac{\arctan^2(\frac{b}{a} +\sqrt{1+\frac{b^2}{a^2}})}{a^2}
  %\\
  \ \leq \ \lambda_1(a,b)^2 - m^2 \ \leq \
  %\\
 \frac{5\pi^2}{2} \left(\frac{1}{a^2} + \frac{1}{b^2}\right)
  \,.
\end{equation*}
\end{Theorem}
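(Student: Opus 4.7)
The plan is to base both inequalities on the standard MIT quadratic form identity
\begin{equation*}
\|T\psi\|^2 = \|\nabla\psi\|^2 + m^2\|\psi\|^2 + m\int_{\partial\Omega_{a,b}}|\psi|^2\, ds,
\end{equation*}
valid for $\psi\in\Dom(T)$ on the polygon $\Omega_{a,b}$ (straight edges contribute no curvature term; vertex contributions in the full identity are non-negative and vanish on the specific test functions chosen below, and the self-adjointness from \cite{LeTreust-Ourmieres-Bonafos_2018} legitimises the density arguments). Together with~\eqref{eq2}, this reduces the theorem to Rayleigh-quotient estimates of $[\|\nabla\psi\|^2+m\int_{\partial\Omega_{a,b}}|\psi|^2\, ds]/\|\psi\|^2$.

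For the upper bound I take $\psi=(u,0)$ with the real scalar test function $u=u_1+u_2$, where
\[
u_1(x,y) := \sin(\pi x/a)\sin(2\pi y/b), \qquad u_2(x,y) := \sin(2\pi x/a)\sin(\pi y/b).
\]
The identities $\sin(2\pi(1-s))=-\sin(2\pi s)$ and $\sin(\pi(1-s))=\sin(\pi s)$ force $u_1+u_2\equiv 0$ on the hypotenuse $y=b(1-x/a)$, so $u\in H^1_0(\Omega_{a,b})$ and $\psi\in\Dom(T)$ with vanishing boundary integrals. For real $u$, $\|T\psi\|^2=m^2\|u\|^2+\|\nabla u\|^2$; writing $-\Delta u_j=\mu_j u_j$ with $\mu_1=\pi^2(1/a^2+4/b^2)$ and $\mu_2=\pi^2(4/a^2+1/b^2)$ and integrating by parts gives
\[
\|\nabla u\|^2 = \mu_1\|u_1\|^2+\mu_2\|u_2\|^2+(\mu_1+\mu_2)\langle u_1,u_2\rangle.
\]
A symmetry computation in the scaled variables $s=x/a$, $t=y/b$---using the invariance of the integrand under $(s,t)\mapsto(1-s,1-t)$ to identify the triangle integral with half the unit-square one, and the $s\leftrightarrow t$ symmetry of $\{s+t\leq 1\}$ to match the two norms---yields the key equality $\|u_1\|^2=\|u_2\|^2=ab/8$. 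Inserting this collapses the Rayleigh quotient to $(\mu_1+\mu_2)/2=(5\pi^2/2)(1/a^2+1/b^2)$ independently of $\langle u_1,u_2\rangle$.

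For the lower bound I drop the non-negative boundary integral and estimate $\|\nabla\psi\|^2=\|\partial_x\psi\|^2+\|\partial_y\psi\|^2$ slice by slice. On the horizontal slice at height $y_0\in(0,b)$ of length $L(y_0)=a(1-y_0/b)\leq a$, the MIT trace conditions read $\psi_2=-i\psi_1$ at $x=0$ (the left leg) and $\psi_2=\chi\psi_1$ at $x=L$ with $\chi=(-a+ib)/\sqrt{a^2+b^2}=e^{i(\pi-\theta_b)}$, $\theta_b=\arctan(b/a)$. The one-dimensional variational problem
\[
\inf\Bigl\{\tfrac{\int_0^L|\phi'|^2}{\int_0^L|\phi|^2}\,:\,\phi\in W^{1,2}([0,L];\C^2),\ \phi_2(0)=e^{i\alpha}\phi_1(0),\ \phi_2(L)=e^{i\beta}\phi_1(L)\Bigr\}
\]
has Euler--Lagrange system $-\phi_j''=\mu\phi_j$ with natural derivative conditions $\phi_2'=-e^{i\alpha}\phi_1'$ at $x=0$ and $\phi_2'=-e^{i\beta}\phi_1'$ at $x=L$; these lead to the characteristic equation $\tan^2(kL)=\tan^2((\alpha-\beta)/2)$. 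For our $\alpha=-\pi/2$, $\beta=\pi-\theta_b$, the smallest positive root is $kL=\pi/4+\theta_b/2$, which via $\tan(\pi/4+\theta/2)=\tan\theta+\sec\theta$ equals $\arctan(b/a+\sqrt{1+b^2/a^2})$. Since $L(y_0)\leq a$, integrating this 1D inequality in $y_0$ yields $\|\partial_x\psi\|^2\geq\arctan^2(b/a+\sqrt{1+b^2/a^2})\|\psi\|^2/a^2$; the symmetric argument on vertical slices (with $\chi_{\mathrm{bottom}}=1$ at one end and the same $\chi$ on the other) contributes the $\|\partial_y\psi\|^2$ term with the $1/b^2$-coefficient.

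The step I expect to be the main obstacle is the careful analysis of the one-dimensional coupled problem: deriving the Euler--Lagrange natural derivative conditions from value-only couplings, reducing to $\tan^2(kL)=\tan^2((\alpha-\beta)/2)$, and selecting the correct smallest positive root from the family $\{kL\equiv\pm(\alpha-\beta)/2\pmod{\pi}\}$. Tracking the arguments of the unit complex numbers $\chi_L,\chi_R$ and the branches of $\arctan$ through these manipulations is where sign and periodicity errors are easiest to make. A secondary, essentially standard, issue is the justification of the quadratic form identity at the vertices of the polygon via density in $\Dom(T)$.
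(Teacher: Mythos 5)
Your proposal is correct and follows the same overall architecture as the paper --- the quadratic-form identity on the triangle, a slicing argument combined with a one-dimensional Poincar\'e inequality for the lower bound, and the scaled first Dirichlet eigenfunction of the isosceles right triangle as a trial function for the upper bound --- but you derive the key one-dimensional lemma by a genuinely different route. The paper computes the spectra of the one-dimensional Dirac operators $H_{Lm}$ and $G_{Lm}$ (one for each leg), obtains implicit eigenvalue equations, proves monotonicity of $\lambda_1(m)^2-m^2$ in $m$, and only then specialises to $m=0$; you instead discard the non-negative mass boundary terms at the outset and minimise the scalar Rayleigh quotient $\int|\phi'|^2/\int|\phi|^2$ subject only to the endpoint couplings $\phi_2=e^{i\alpha}\phi_1$, $\phi_2=e^{i\beta}\phi_1$. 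Your Euler--Lagrange analysis, the characteristic equation $\tan^2(kL)=\tan^2((\alpha-\beta)/2)$, the choice of smallest positive root $kL=\pi/4+\theta_b/2$ (resp.\ $\pi/2-\theta_b/2$ for the other leg), and the identity $\tan(\pi/4+\theta/2)=\tan\theta+\sec\theta$ all check out and reproduce exactly the constants $\arctan(\frac{b}{a}+\sqrt{1+b^2/a^2})$ and $\arctan(\frac{a}{b}+\sqrt{1+a^2/b^2})$ of the paper's two lemmas; treating both legs through the single parameter $\alpha-\beta$ is a genuine simplification. What your route gives up is the $m$-dependent information (monotonicity of the one-dimensional gap, the improvement in Remark~4 and the second Corollary), which the paper extracts from the explicit Dirac analysis but which is not needed for Theorem~1. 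Your upper-bound computation ($\|u_1\|^2=\|u_2\|^2$ by the $s\leftrightarrow t$ and $(s,t)\mapsto(1-s,1-t)$ symmetries, so the quotient collapses to $(\mu_1+\mu_2)/2$ regardless of the cross term) is also correct and makes explicit what the paper leaves as ``after computing the value of the Rayleigh quotient''.

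One caveat: the quadratic-form identity you start from is not ``standard'' on a polygon --- it is precisely Theorem~2 of the paper, whose proof rests on the density Lemma~1 (approximation within $\Dom(T)$ by smooth spinors vanishing near the vertices). Your parenthetical justification (``vertex contributions are non-negative and vanish on the test functions'') is not quite the right statement: the paper proves there are no vertex contributions at all, and for the lower bound you need the identity, or at least the inequality $\|T\psi\|^2\geq\|\nabla\psi\|^2+m^2\|\psi\|^2$, for \emph{arbitrary} elements of $\Dom(T)$, not merely for your chosen trial function. Citing Theorem~2 closes this; as written it is the only step of your argument that is not self-contained.
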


Note that the upper bound becomes sharp 
in the limit $m \to \infty$ for $a=b$.
Indeed, it is known that $\lambda_1(\Omega)^2 - m^2$ 
converges to the Dirichlet eigenvalue $\Lambda_1(\Omega)$
as $m \to \infty$ 
(see, \eg, \cite{Arrizibalaga-LeTreust-Raymond_2017})
and $\Lambda_1(a,a) = 5\pi^2/a^2$.
In contrast to the one-dimensional spectrum of the operator in \cite{2DK}, that in this paper is not symmetric.
As in~\cite{2DK}, the upper bound is obtained by using a suitable
trial function in~\eqref{eq2}. The lower bound employs 
a Poincar\'e-type inequality for a one-dimensional Dirac problem   
on an interval. 
The latter yields an $m$-dependent (implicit) lower bound,
while the lower bound of Theorem~\ref{Thm.bounds}
is due to an (explicit) uniform estimate
of the closest-to-zero eigenvalue of the one-dimensional problem.
  
As a consequence, we get the following sufficient conditions
which guarantee the validity of Conjecture~\ref{Conj.main}.
\begin{Corollary}\label{Corol}
 Let $k$ be defined as in Conjecture~\ref{Conj.main}
 and $ m\geq 0$. \\
Conjecture~\ref{Conj.main}.(i) holds 
under the following extra hypotheses:
\begin{center}
$
  a \geq 9 \, k $ \quad or \quad $ a \leq \frac{k}{9}
$, \\
\end{center}
%
%\noteD{I find it strange that you need the same 
%$a \leq \frac{k}{9}$ in both cases.}%
Conjecture~\ref{Conj.main}.(ii) holds 
under the following extra hypotheses:
\begin{center}
$
\displaystyle
  a \geq  3.5 \, k $ \quad or \quad $ a \leq \frac{k}{9}
$.\\
\end{center}
\end{Corollary}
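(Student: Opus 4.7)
The strategy, modeled on \cite{2DK} for rectangles, is to sandwich the desired inequality between the two explicit bounds of Theorem~\ref{Thm.bounds}. Evaluating the upper bound at the reference geometry $a = b = k$ yields
$$
\lambda_1(k,k)^2 - m^2 \;\leq\; \frac{5\pi^2}{k^2},
$$
so for both parts of Conjecture~\ref{Conj.main} it suffices to show that, under the extra hypotheses, the lower bound of Theorem~\ref{Thm.bounds} exceeds $5\pi^2/k^2$. A simple observation simplifies matters: since $x + \sqrt{1+x^2} \geq 1$ for every $x \geq 0$, both arctangents appearing in the lower bound are at least $\pi/4$, so a sufficient condition becomes
$$
\frac{\pi^2}{16}\!\left(\frac{1}{a^2} + \frac{1}{b^2}\right) \;\geq\; \frac{5\pi^2}{k^2},
$$
which is already implied by $\min(a,b) \leq k/\sqrt{80}$, and a fortiori by the cleaner requirement $\min(a,b) \leq k/9$.

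It then remains to translate $\min(a,b) \leq k/9$ into a condition on $a$ alone in each geometric regime. In part~(i), the area constraint $ab = k^2$ makes $\min(a,b) \leq k/9$ equivalent to $\max(a,b) \geq 9k$, i.e.\ $a \leq k/9$ or $a \geq 9k$. In part~(ii), the perimeter constraint $a + b + \sqrt{a^2 + b^2} = (2+\sqrt{2})k$ determines $b$ as a strictly decreasing smooth function of $a$; solving $b(a_0) = k/9$ explicitly yields a unique threshold~$a_0$, and a direct numerical evaluation gives $a_0 \leq 3.5\,k$, so each of the hypotheses $a \leq k/9$ and $a \geq 3.5\,k$ forces $\min(a,b) \leq k/9$ and thereby implies the conjecture.

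I do not foresee any serious obstacle. Since the two bounds of Theorem~\ref{Thm.bounds} are fully explicit, the whole argument is algebraic; the only technical care is the monotonicity of $b(a)$ under the perimeter constraint and the numerical verification of the threshold $a_0 \leq 3.5\,k$. The reason one cannot easily improve the numerical constants $9$ and $3.5$ in the corollary with this method is that the lower bound of Theorem~\ref{Thm.bounds} is itself not tight as $a/b \to 1$: the simple estimate $\arctan(\cdot) \geq \pi/4$ throws away the precise angle dependence, and any refinement would have to exploit the sharper expression $\pi/4 + \tfrac12\arctan(a/b)$, which would complicate the resulting trigonometric inequality.
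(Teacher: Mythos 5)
Your proposal is correct and follows essentially the same route as the paper: both compare the explicit lower bound of Theorem~\ref{Thm.bounds} at $(a,b)$ with the upper bound $5\pi^2/k^2$ at $(k,k)$, use $\arctan\bigl(x+\sqrt{1+x^2}\bigr)\geq\pi/4$ to reduce everything to $\frac{1}{a^2}+\frac{1}{b^2}\geq \frac{80}{k^2}$, and then translate this into the stated conditions on $a$ via the area and perimeter constraints. The only cosmetic difference is in part~(ii), where you invert the perimeter constraint exactly and locate the threshold numerically, while the paper simply uses the cruder bound $b\leq(2+\sqrt{2})k-a$; both suffice.
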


In other words, Conjecture~\ref{Conj.main} holds true
for sufficiently eccentric right triangles.
  
The paper is organised as follows.  
In Section~\ref{Sec.no}, we derive a formula for the expectation value of the square of the Dirac operator in triangles. 
This formula serves as the foundation for the proof of Theorem~\ref{Thm.bounds}.   
The one-dimensional Poincar\'e-type inequality 
is established in Section~\ref{Sec.well}.
The main results are proved in Section~\ref{Sec.proof}.
The extension of the formula for the expectation value 
of the square of the Dirac operator to planar polygons
can be found in Appendix~\ref{A}.

%----------------------------------------%
\section{The square of the Dirac operator in polygons}\label{Sec.no}
%----------------------------------------%
%
Recall that our right triangle~$\Omega_{a,b}$ 
is special planar polygon determined
by the three vertices $O:=(0,0)$, $A:=(a,0)$ and $B:=(0,b)$.

\begin{figure}[h]
  \begin{center}
  \includegraphics[scale=3]{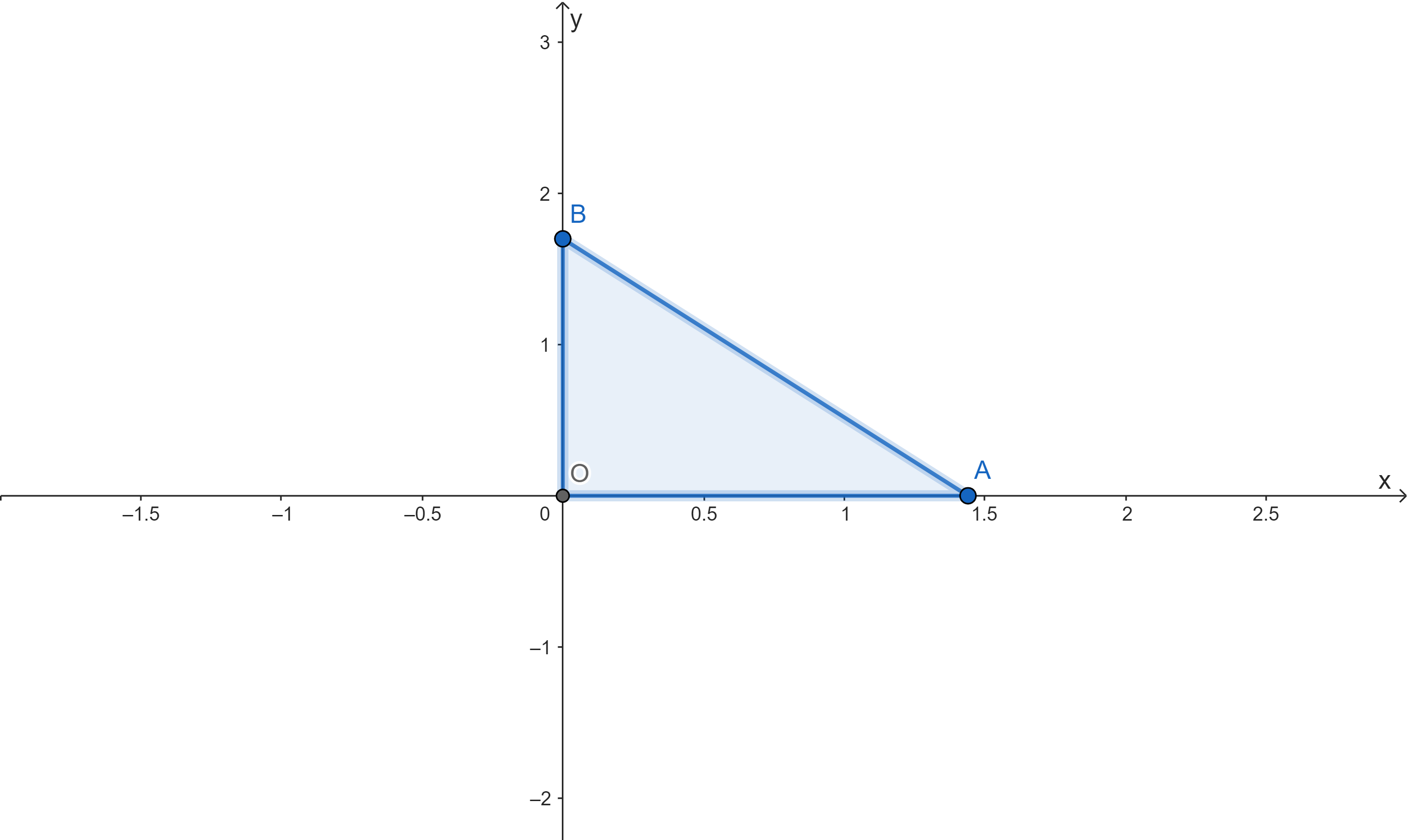}
  \caption{The right triangle $OAB$.
  \\
 % \txtD{Fonts in the figure are nor correct.
  %$x$-axis unnecessarily too long.}
  }
  \label{Fig}
  \end{center}
\end{figure}

Let~$T_{a,b}$ denote the operator~\eqref{operator1}--\eqref{operator2}
in the case of the triangle~$\Omega_{a,b}$. 
The operator is self-adjoint and has a compact resolvent.
The eigenvalue problem $T_{a,b} u=\lambda u$ 
is equivalent to the system
\begin{equation}\label{system} 
\left\{
\begin{aligned}
  -i(\partial_1-i\partial_2) u_2 &= (\lambda-m) u_1
  && \mbox{in} \quad \Omega_{a,b} \,,
  \\
  -i(\partial_1+i\partial_2) u_1 &= (\lambda+m) u_2
  && \mbox{in} \quad \Omega_{a,b} \,,
  \\
  u_2 &= u_1 
  && \mbox{on} \quad OA \,,
  \\
  u_2 &= -iu_1 
  && \mbox{on} \quad OB \,,
 \\
  u_2 &= \left( \frac{-a}{\sqrt{a^2+b^2}}+ i \, \frac{b}{\sqrt{a^2+b^2}}\right) u_1 
  && \mbox{on} \quad AB \,.
\end{aligned}  
\right.
\end{equation}
The spectrum of~$T_{a,b}$ is symmetric with respect to zero.
Indeed,  
$
  u
  = \begin{psmallmatrix}
  u_1 \\ u_2 
  \end{psmallmatrix}
$ 
is an eigenfunction of~$T_{a,b}$
corresponding to an eigenvalue~$\lambda$ if, and only if,
$
 \begin{psmallmatrix}
  \bar{u}_2 \\ \bar{u}_1 
  \end{psmallmatrix}
$  
is an eigenfunction of~$T_{a,b}$ corresponding to an eigenvalue~$-\lambda$
(charge conjugation symmetry).
It will become evident in a moment
that any solution of~\eqref{system}
necessarily satisfies $|\lambda| \geq m$. Our objective is to study the smallest positive solution, $\lambda_1(a,b)$, 
of~\eqref{system}.

Since the analogous case of rectangles 
cannot be solved by separation of variables~\cite{2DK}, 
there seem to be no hope to get explicit solutions of~\eqref{system}.  
As an alternative approach, we focus on the variational 
characterisation~\eqref{eq2}.
To this aim, we need a more suitable formula for the square norm 
$\|T_{a,b}u\|^2 = (u,T_{a,b}^2 u)$.
If~$\Omega$ were a smooth bounded domain, 
we would have (see, \eg, \cite{Arrizibalaga-LeTreust-Raymond_2017})
\begin{equation}\label{curvature}
\|Tu\|^2 = \|\nabla u\|^2 + m^2 \|u\|^2
+ m \, \|\gamma u\|^2
- \frac{1}{2} \int_{\partial\Omega} \kappa \, |\gamma u|^2 ds 
\end{equation}
for every $u \in \Dom(T)$,
where~$\kappa$ is the signed curvature of the boundary~$\partial\Omega$
(with the convention that $\kappa<0$ if~$\Omega$ is convex)
and $\gamma: W^{1,2}(\Omega;\C^2) \to \sii(\partial\Omega;\C^2)$ 
is the boundary-trace operator. 
Formally, this is easily seen by expanding 
$\|(\partial_1 + i\partial_2)u_1\|^2$
and $\|(\partial_1 - i\partial_2)u_2\|^2$
and integrating by parts. 
To justify this approach,
one needs an extra regularity of~$u$.
This is certainly a non-trivial matter because, 
while the curvature is piece-wise zero for triangles,
it is not defined at the vertices.

Our main ingredient to prove 
an analogue of the useful formula~\eqref{curvature} 
for triangles is
the following density result.
The idea and proof is due to 
D.~Krej\v{c}i\v{r}\'ik \cite{private}.

\begin{Lemma}\label{Lem.polygon} 
Let $\Omega$ be any two-dimensional polygon 
with the set of vertices~$V$. 
Then 
$$
  \mathcal{D} := \Dom(T) \cap C_0^\infty(\Real^2\setminus \{ V \})
$$ 
is a core of~$T$.
\end{Lemma}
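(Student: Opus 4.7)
\medskip
\noindent
\textbf{Proof proposal.} Since $T$ is closed, showing that $\cD$ is a core amounts to showing that it is dense in $\Dom(T)$ equipped with the graph norm; for a first-order elliptic operator on an open set, this graph norm is equivalent to the norm of $W^{1,2}(\Omega;\C^2)$ on $\Dom(T)$. So given $u \in \Dom(T)$ and $\eps > 0$, the plan is to construct $v \in \cD$ with $\|u-v\|_{W^{1,2}(\Omega;\C^2)} < \eps$. The construction has two stages: first cut out tiny neighbourhoods of the vertices, and then mollify along each edge in a way that preserves the infinite-mass boundary condition.

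For the vertex cut-off, I would use the classical fact that points have vanishing $W^{1,2}$-capacity in $\Real^2$. Concretely, for each vertex $p \in V$ and each small $\eps>0$, one can build a radial cut-off $\chi_{p,\eps}$ which vanishes in a disk around $p$, equals~$1$ outside a slightly larger disk, and whose gradient has $\sii$-norm controlled by $1/|\log \eps|$ (the standard logarithmic construction). Letting $\chi_\eps := \prod_{p\in V} \chi_{p,\eps}$ and $u_\eps := \chi_\eps u$, the product $u_\eps$ converges to $u$ in $W^{1,2}(\Omega;\C^2)$ as $\eps \to 0$: the terms $(\nabla \chi_\eps) u$ are controlled by combining the $\sii$-smallness of $\nabla \chi_\eps$ with the local boundedness of $u$ on bounded sets (via the Sobolev embedding $W^{1,2} \hookrightarrow L^q$ for all finite $q$ in two dimensions) and Hölder's inequality. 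Because~$\chi_\eps$ is real-valued, $u_\eps$ still satisfies the boundary condition encoded in $\Dom(T)$.

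Now $u_\eps$ vanishes in a neighbourhood of each vertex, so its boundary trace is supported on a finite union of relatively open segments in the straight edges. On each such edge the outward normal $n$ is constant, so the infinite-mass condition reads $\psi_2 = c\,\psi_1$ with a fixed unimodular constant $c \in \C$. Introducing the invertible spinor change of variable $\phi_1 := u_{\eps,1}$, $\phi_2 := u_{\eps,2} - c\, u_{\eps,1}$, one gets a function with $\phi_2 = 0$ on the edge. Locally flattening (each edge is already flat, so this is only a rigid motion to put the edge into the $x_1$-axis with $\Omega$ on the upper side), the standard extension by even reflection of $\phi_1$ and odd reflection of $\phi_2$ across the edge yields a $W^{1,2}$ function on a whole neighbourhood, to which a standard mollifier $\rho_\delta \ast (\cdot)$ applies. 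Undoing the linear change, one recovers smooth approximations of $u_\eps$ that keep the boundary relation $\psi_2 = c\psi_1$ on the edge and converge to $u_\eps$ in $W^{1,2}$ as $\delta \to 0$. Using a partition of unity subordinate to a cover consisting of small discs around interior points of each edge together with an open set compactly contained in $\Omega$, one assembles these local approximations into a single function $v \in C^\infty_0(\Real^2 \setminus V) \cap \Dom(T) = \cD$ with $\|u_\eps - v\|_{W^{1,2}} < \eps/2$; combined with $\|u - u_\eps\|_{W^{1,2}} < \eps/2$ from the first stage, this completes the argument.

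I expect the genuinely delicate step to be the first one: the corners are where the normal field is discontinuous, so no sensible mollifier preserves the boundary condition there, and one must rule out the possibility that an element of $\Dom(T)$ concentrates mass in $W^{1,2}$ near a vertex in a way incompatible with the logarithmic capacity estimate. The two-dimensional capacity argument is exactly what makes this work; in higher dimensions the analogous statement for edges would fail because $1$-dimensional sets carry positive $W^{1,2}$-capacity in $\Real^3$. The mollification step is more routine once one realises that on each individual edge the boundary condition is a \emph{constant-coefficient} linear relation between the components of the spinor and is therefore preserved by the even/odd reflection scheme.
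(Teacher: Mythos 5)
Your overall strategy (cut out the vertices by a logarithmic capacity cut-off, then smooth along each edge using that the boundary condition there is a constant-coefficient relation) is the same as the paper's, and your Stage~2 is a sound variant of the paper's Step~3: where you flatten, pass to $\phi_2=u_2-c\,u_1$ and use even/odd reflection plus a symmetric mollifier, the paper instead approximates the globally Dirichlet combination $u_2-e^{-i\theta}u_1$ by $C_0^\infty(\Omega)$ functions and reconstructs $u_2$; both preserve the infinite-mass condition edge by edge.

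The genuine gap is in Stage~1, in the estimate of $\int_\Omega|\nabla\chi_{p,\eps}|^2|u|^2$. The logarithmic cut-off satisfies $\|\nabla\chi_{p,\eps}\|_{\sii}\to 0$, but for every $q>2$ one has $\|\nabla\chi_{p,\eps}\|_{L^q}\to\infty$ (the integral $\int_{\eps^2}^{\eps}r^{1-q}\,\der r$ diverges like a negative power of $\eps$, which the factor $|\log\eps|^{-q}$ cannot compensate). Since in two dimensions $W^{1,2}$ embeds into $L^{q'}$ only for finite $q'$, and not into $L^\infty$, H\"older's inequality forces you to pair $|\nabla\chi_{p,\eps}|^2\in L^{q/2}$ with some $q>2$ against $|u|^2\in L^{(q/2)'}$, and the first factor blows up. So your justification does not close; the estimate $\int|\nabla\chi_{p,\eps}|^2|u|^2\leq\|u\|_{L^\infty}^2\|\nabla\chi_{p,\eps}\|_{\sii}^2$ really needs $u\in L^\infty$. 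This is exactly why the paper inserts a preliminary truncation step (its Step~1), and that step is itself not free of content: truncating the real and imaginary parts of $u_1,u_2$ componentwise would destroy the relation $u_2=e^{-i\alpha}u_1$ on the boundary, so the paper truncates the Dirichlet combinations $\psi_1=u_2-u_1$ and $\psi_2=u_2-e^{-i\alpha}u_1$ (whose boundary traces vanish, a property preserved by truncation) and reconstructs $u^N$ from them. You would either need to add such a boundary-condition-preserving truncation before your capacity argument, or replace H\"older by a genuinely stronger tool such as the critical Hardy inequality $\int_{B}|x-p|^{-2}(\log|x-p|)^{-2}|u|^2\lesssim\|u\|_{W^{1,2}}^2$, which is a nontrivial input you have not invoked.
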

\begin{proof}
Clearly, it is enough to consider the massless case $m=0$.
Moreover, by partition of unity, it suffices to consider
the sector 
$
  \Omega := \{(r\cos\theta,r\sin\theta): \, 
  r \in (0,\infty), \ \theta \in (0,\alpha)
  \}
$
with $\alpha \in (0,2\pi)$. 
Let us denote
$
  \partial_\theta \Omega := \{(r\cos\theta,r\sin\theta): \, 
  r \in (0,\infty)
  \}
$.
Consider the Dirac operator~\eqref{operator1}--\eqref{operator2}
(with $m=0$),
which involves the infinite-mass boundary conditions
\begin{equation}\label{bc}
\begin{aligned}
  u_2 &=  u_1 && \mbox{on} \quad \partial_0\Omega \,,
  \\
  u_2 &= e^{-i\alpha }u_1 && \mbox{on} \quad \partial_\alpha\Omega  \,,
\end{aligned}  
\end{equation}
in the sense of traces in 
$
  W^{1,2}(\Omega;\C^2)
  \ni u = 
  \begin{psmallmatrix}
    u_1 \\ u_2 
  \end{psmallmatrix}
$. 
More specifically,
$
  \Dom(T) = \left\{ 
  u \in W^{1,2}(\Omega;\C^2) : \
  \eqref{bc} \ \mbox{holds} 
  \right\}
$.

The crucial observation is that the functions
\begin{equation*}
\begin{aligned}
  \psi_1 &:= u_2 - u_1 \,, 
  \\
  \psi_2 &:= u_2 - e^{-i\alpha} u_1 \,, 
\end{aligned}  
\end{equation*}
satisfy the Dirichlet boundary condition 
on $\partial_\beta\Omega$ and $\partial_\alpha\Omega$, respectively. 
The inverse formulae are given by 
\begin{equation*}
\begin{aligned}
  u_1 &:= \frac{1}{e^{-i\alpha}- 1} \, 
  (\psi_1 - \psi_2)\,, 
  \\
  u_2 &:= \frac{1}{e^{-i\alpha}- 1} \,
  (e^{-i\alpha} \psi_1 - \psi_2) \,.
\end{aligned}  
\end{equation*}

\subsubsection*{Step 1: Approximation by bounded functions}
For any function $\phi:\Omega \to \Real$,
define the vertical cut-off
\begin{equation*}
  \phi^N(x) :=
  \begin{cases}
    N & \mbox{if} \quad \phi(x) > N \,,
    \\
    \phi(x) & \mbox{if} \quad |\phi(x)| \leq N \,,
    \\
    -N & \mbox{if} \quad \phi(x) < -N \,.
  \end{cases} 
\end{equation*}
By definition,
$
  \|\phi^N\|_{L^\infty(\Omega)} \leq N
$.
If $\phi \in W^{1,2}(\Omega)$, we have \\
\begin{equation*}
\begin{aligned}
\|\phi -\phi^N \|^2_{W^{1,2}(\Omega)}& = \int_\Omega |\phi-\phi^N|^2 \, dx + \int_\Omega |\nabla\phi -\nabla\phi^N |^2 \, dx \\
&= \int_{\{|\phi| > N \}} |\phi-N|^2 \, dx + \int_{\{|\phi| > N \}} |\nabla\phi|^2 \, dx\\
&\leq \int_{\{|\phi| > N \}} 4 \, |\phi(x)|^2 \, dx + \int_{\{|\phi| > N \}} |\nabla \phi(x)|^2 \, dx 
\xrightarrow[N\to\infty]{} 0 \,.
\end{aligned}
\end{equation*}
Hence $\phi^N \to \phi$ in $W^{1,2}(\Omega)$ as $N \to \infty$.

In our case, if $u \in \Dom(T)$, we set
\begin{equation*}
  \psi^N:=
  \begin{pmatrix}
    (\Re \psi_1)^N + i (\Im \psi_1)^N \\
    (\Re \psi_2)^N + i (\Im \psi_2)^N 
  \end{pmatrix} 
  \qquad \mbox{and} \qquad
  u^N:=
  \frac{1}{e^{-i\alpha}-1} \, 
  \begin{pmatrix}
    \psi_1^N - \psi_2^N 
    \\
    e^{-i\alpha} \psi_1^N - \psi_2^N 
  \end{pmatrix} 
  .
\end{equation*}
Then $u^N \in \Dom(T) \cap L^\infty(\Omega;\C^2)$
and \\
\begin{equation*}
\begin{aligned}
\|u^N - u \|^2_{W^{1,2}(\Omega;\C^2)} 
&= \frac{1}{|e^{-i\alpha} -1|^2}  \|\psi_1 - \psi_1^N + \psi_2 -\psi_2^N \|^2_{W^{1,2}(\Omega)} \\
& \quad + \frac{1}{|e^{-i\alpha} -1|^2} \|e^{-i\alpha}(\psi_1 - \psi_1^N) + \psi_2 -\psi_2^N \|^2_{W^{1,2}(\Omega)} \\
&\leq \frac{2}{|e^{-i\alpha} -1|^2} ( \|\psi_1 - \psi_1^N\|^2_{W^{1,2}(\Omega)} + \|\psi_2 -\psi_2^N \|^2_{W^{1,2}(\Omega)} ) 
\xrightarrow[N\to\infty]{}
 0 \,.
\end{aligned}
\end{equation*}
Therefore, $u^N \to u$ in $W^{1,2}(\Omega;\C^2)$ as $N \to \infty$.
Consequently, 
\begin{equation*}
\mathcal{D}_1 := \Dom(T) \cap L^\infty(\Omega;\C^2)
\mbox{ is a core of~$T$.}
\end{equation*}

\subsubsection*{Step 2: Approximation by compactly supported functions}
Consider the cut-off sequence 
$\xi_n:[0,\infty) \to [0,1]$
defined for every $n \geq 2$ by
\begin{equation*}
  \xi_n(r) :=
  \begin{cases}
    0 
    & \mbox{if} \quad r \in [0,n^{-2}) \,, 
    \\
    \displaystyle
    \frac{\log(n^2 r)}{\log(n)}  
    & \mbox{if} \quad r \in [n^{-2},n^{-1}) \,, 
    \\
    1
    & \mbox{if} \quad r \in [n^{-1},n) \,, 
    \\
    \displaystyle
    \frac{\log(n^{-2} r)}{\log(n^{-1})}  
    & \mbox{if} \quad r \in [n,n^{2}) \,, 
    \\
    0 
    & \mbox{if} \quad r \in [n^{2},\infty) \,.
  \end{cases} 
\end{equation*}
For every $u \in \mathcal{D}_1$,
define $u_n(x) := \xi_n(|x|) u(x)$.
Clearly, $u_n \in L^\infty_0(\Omega;\C^2)$,
by which we mean that~$u_n$ is bounded and~vanishes 
in a neighbourhood of~$0$
as well as in a neighbourhood of infinity.
Moreover, $u_n \in \Dom(H)$.

Since $\xi_n \to 1$ pointwise as $n \to \infty$,
it is easy to see that $u_n \to u$ in $\sii(\Omega;\C^2)$
as $n \to \infty$ by the dominated convergence theorem. 
Writing
\begin{equation*}
  \int_{\Omega \cap \{|x| \geq 1\}} |\nabla (u_n-u)|^2
  \leq  2\int_{\Omega \cap \{|x| \geq 1\}} (\xi_n-1)^2 \, |\nabla u|^2
  + 2 \int_{\Omega \cap \{|x| \geq 1\}} |\nabla\xi_n|^2 \, |u|^2
  \,,
\end{equation*}
we see that the first term on the right-hand side 
tends to zero as $n \to \infty$,
as above due to the dominated convergence theorem. 
For the second term, we estimate
\begin{equation*}
  \int_{\Omega \cap \{|x| \geq 1\}} |\nabla\xi_n|^2 \, |u|^2
  \leq \|u\|_{L^\infty(\Omega;\C^2)}^2
  \int_{\Omega \cap \{|x| \geq 1\}} |\nabla\xi_n|^2 
\end{equation*}
and use the polar coordinates to control 
the last integral as follows:
\begin{equation*}
  \int_{\Omega \cap \{|x| \geq 1\}} |\nabla\xi_n|^2 
  = 2\pi \int_n^{n^2} \frac{1}{r^2 \log^2(n)} \, r \, \der r
  = \frac{2\pi}{\log(n)}
  \xrightarrow[n \to \infty]{} 0 \,.
\end{equation*}
In a similar manner, we verify that 
\begin{equation*}
  \int_{\Omega \cap \{|x| \leq 1\}} |\nabla (u_n-u)|^2
  \xrightarrow[n \to \infty]{} 0 \,.
\end{equation*}
Consequently, 
\begin{equation*}
\mathcal{D}_2 := \Dom(T) \cap L_0^\infty(\Omega;\C^2)
\mbox{ is a core of~$T$.}
\end{equation*}

\subsubsection*{Step 3: Approximation by smooth functions}
Let $u \in \mathcal{D}_2$. 
Then the function $\psi := u_2 - e^{-i\theta} u_1$
is well defined, where~$\theta(x)$ is the unique number
in $[0,\alpha]$ with 
$x_1 = |x| \cos[\theta(x)]$ and $x_2 = |x| \sin[\theta(x)]$ for every $x \in \Omega$.
Since~$\psi$ vanishes on~$\partial\Omega$,
there exists a sequence $\{\psi^j\} \subset C_0^\infty(\Omega)$
such that $\psi^j \to \psi$ in $W^{1,2}(\Omega;\C^2)$
as $j \to \infty$.  

Since~$\Omega$ satisfies the segment condition,
there also exists a sequence 
$\{u_1^j\} \subset C_0^\infty(\Real^2)$ 
such that $u_1^j \to u_1$ in $W^{1,2}(\Omega;\C^2)$
as $j \to \infty$. 
Since~$u_1$ vanishes in a neighbourhood of zero,
the sequence can be chosen to lie in 
$C_0^\infty(\Real^2 \setminus \{0\})$.  

Define 
$
  u_2^j := \psi^j + e^{-i\theta} u_1^j 
  \in C_0^\infty(\Real^2 \setminus \{0\})
$.
Then $u_2^j \to u_2$ in $W^{1,2}(\Omega;\C^2)$
as $j \to \infty$. 
Moreover, $u^j$~satisfies~\eqref{bc}. 
Consequently, 
\begin{equation}
\mathcal{D} = \Dom(T) \cap L_0^\infty(\Omega;\C^2)
\cap C_0^\infty(\Real^2\setminus \{0\})
\mbox{ is a core of~$T$.}
\end{equation}
This concludes the proof of the lemma.
\end{proof}

As a special consequence, 
the norm of~$T_{a,b} u$ can be computed explicitly 
by using integration by parts.
Formally, the result coincides 
with the formula~\eqref{curvature} for smooth domains
with $\kappa=0$.
 
\begin{Theorem}\label{formula}
For every $u \in \Dom(T_{a,b})$,
\begin{equation}\label{norm}   
  \|T_{a,b} u\|^2   = \|\nabla u\|^2_{\sii(\Omega_{a,b})} 
  + m^2 \|u\|^2_{\sii(\Omega_{a,b})} + m \, \|\gamma u\|^2_{\sii ( \partial\Omega_{a,b}) } .
\end{equation}
\end{Theorem}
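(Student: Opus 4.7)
The plan is to prove \eqref{norm} first on the core $\mathcal{D}$ furnished by Lemma~\ref{Lem.polygon} and then extend by density. A crucial point is that the three approximation steps in the proof of Lemma~\ref{Lem.polygon} (vertical cut-offs, radial cut-offs, mollifications) produce convergence in $W^{1,2}(\Omega_{a,b};\C^2)$, not merely in the graph norm of $T_{a,b}$. Since $T_{a,b}$ is continuous from $W^{1,2}$ to $\sii$ and the boundary trace is continuous from $W^{1,2}$ to $\sii(\partial\Omega_{a,b};\C^2)$, both sides of \eqref{norm} depend continuously on $u$ in the $W^{1,2}$ topology, so it is enough to verify \eqref{norm} for $u \in \mathcal{D}$.

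For such $u$, setting $P_\pm := -i(\partial_1 \pm i\partial_2)$, a direct expansion gives
\begin{equation*}
\|T_{a,b}u\|^2 = m^2\|u\|^2 + \|P_+u_1\|^2 + \|P_-u_2\|^2 + 2m\,\mathrm{Re}\!\left(\langle u_1,P_-u_2\rangle - \langle P_+u_1,u_2\rangle\right).
\end{equation*}
Because $u$ is smooth and vanishes in a neighbourhood of every vertex, all subsequent integrations by parts on $\Omega_{a,b}$ produce only smooth boundary integrals over the three open sides $OA$, $OB$, $AB$, with no corner contributions. Applying integration by parts to the cross term gives $\langle u_1,P_-u_2\rangle - \langle P_+u_1,u_2\rangle = -\int_{\partial\Omega_{a,b}}\bar{u}_1 u_2 (in_1+n_2)\,ds$; substituting the MIT relation $u_2=(in_1-n_2)u_1$, using $(in_1-n_2)(in_1+n_2)=-1$ and $|u_2|=|u_1|$ on $\partial\Omega_{a,b}$, converts the entire cross-term contribution into $m\|\gamma u\|^2_{\sii(\partial\Omega_{a,b})}$.

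For the diagonal terms, one exploits $P_-P_+=P_+P_-=-\Delta$ on smooth functions and Green's formula to rewrite $\|P_+u_1\|^2$ and $\|P_-u_2\|^2$ as $\|\nabla u_1\|^2$ and $\|\nabla u_2\|^2$ plus tangential boundary integrals of opposite signs involving $u_1\partial_\tau \bar{u}_1$ and $u_2 \partial_\tau \bar{u}_2$. Here the polygonal rigidity is essential: on each side the outward normal $n$ is constant, so differentiating the MIT relation tangentially yields $\partial_\tau u_2 = (in_1-n_2)\partial_\tau u_1$ and hence $u_2\partial_\tau\bar{u}_2 = u_1\partial_\tau\bar{u}_1$; the two tangential contributions cancel and leave $\|P_+u_1\|^2 + \|P_-u_2\|^2 = \|\nabla u\|^2$.

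The main obstacle is precisely this cancellation: it requires both the vanishing of $u$ near the vertices, which frees the integrations by parts from singular corner terms, and the piecewise constancy of $n$, which allows the tangential derivative to commute with the MIT condition. This is the precise sense in which \eqref{curvature} survives on a polygon despite the absence of a well-defined curvature at the vertices. Combining the three ingredients establishes \eqref{norm} on $\mathcal{D}$, and the density statement of the first paragraph extends the identity to every $u \in \Dom(T_{a,b})$.
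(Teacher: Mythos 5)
Your proof is correct and follows essentially the same route as the paper: reduce to the core $\mathcal{D}$ of Lemma~\ref{Lem.polygon} using continuity of both sides in the $W^{1,2}$ topology, expand the square, convert the mass cross-terms into $m\|\gamma u\|^2$ via the MIT relation, and cancel the remaining tangential boundary terms using the piecewise constancy of the normal together with the vanishing of $u$ near the vertices. The only difference is presentational: you phrase the boundary computations intrinsically in terms of $n$ and $\partial_\tau$, whereas the paper carries them out in explicit coordinates side by side.
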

\begin{proof}
By virtue of Lemma \eqref{Lem.polygon}, 
for every $u = \begin{psmallmatrix}
  u_1 \\ u_2 
  \end{psmallmatrix}
  $
   in $\Dom(H_{a,b})$, there exists a sequence\\
    $\{u_n = \begin{psmallmatrix}
  u_{1n} \\ u_{2n} 
  \end{psmallmatrix}
\} $ in $ \mathcal{D}$ such that 
%\soutD{$ u_n \underset{n \rightarrow \infty}{\rightarrow} u$} 
$ u_n \xrightarrow[n \to \infty]{} u$
in $H^1(\Omega_{a,b})$. 
Using integration by parts, we compute:
%\noteD{Check the sign that was missing and I added.}%
%\noteD{$H_{a,b} = T_{a,b}$? Correct.}%
%
\begin{equation*}
\begin{aligned}
\lefteqn{\|T_{a,b} u_n \|^2}   
\\
&= \|m u_{1n} -i (\partial_1- i\partial_2) u_{2n}\|^2 + \|m u_{2n} + i (\partial_1 + i \partial_2 ) u_{1n}\|^2 \\
& = m^2 \|u_n \|^2 + \|\nabla u_n\|^2 - 2 m \Re ( u_{1n}, i(\partial_1- i\partial_2) u_{2n}) \\
& \quad + 2m \Re( u_{2n}, i (\partial_1 + i \partial_2 ) u_{1n}) - 2\Re (\partial_1 u_{2n} ,i \partial_2 u_{2n}) + 2\Re( \partial_1 u_{1n}, i \partial_2 u_{1n}) \\
& = m^2 \|u_n \|^2 + \|\nabla u_n\|^2 +  2m \Re \int_{0}^{b} i {\bar{u}_{1n}}(0,x_2) u_{2n}( 0,x_2) \, dx_2  \\
 & \quad - 2m \Re\int_{0}^{b} i {\bar{u}_{1n}}(-\frac{a}{b}x_2 +a,x_2) u_{2n}( -\frac{a}{b}x_2 +a,x_2) \, dx_2  \\ 
 & \quad + 2m \Re \int_{0}^{a}  {\bar{u}_{1n}}(x_1,0) u_{2n}( x_1,0) \, dx_1 \\
& \quad -2m \Re \int_{0}^{a}  {\bar{u}_{1n}}(x_1, -\frac{b}{a}x_1 +b) u_{2n}( x_1,-\frac{b}{a}x_1 +b) \, dx_1  \\
& \quad - 2 \Re \int_{0}^{b} i {\bar{u}_{1n}}(0,x_2) \partial_2 u_{1n}( 0,x_2) \, dx_2 + 
 2 \Re \int_0^b i {\bar{u}_{1n}}(-\frac{a}{b} x_2 + a,x_2) \partial_2 u_{1n}( -\frac{a}{b} x_2 + a,x_2) \, dx_2  \\
& \quad + 2 \Re \int_{0}^{a} i {\bar{u}_{1n}}(x_1,0) \partial_2 u_{1n}( x_1,0) \, dx_1 - 2 \Re \int_0^a i {\bar{u}_{1n}}(-\frac{b}{a} x_1 + b,x_1) \partial_2 u_{1n}( x_1,-\frac{b}{a} x_1 + b) \, dx_1   \\
& \quad +
2 \Re \int_{0}^{b} i {\bar{u}_{2n}}(0,x_2) \partial_2 u_{2n}( 0,x_2) \, dx_2 
 - 2 \Re \int_0^b i {\bar{u}_{2n}}(-\frac{a}{b} x_2 + a,x_2) \partial_2 u_{2n}( -\frac{a}{b} x_2 + a,x_2) \, dx_2  \\
& \quad - 2 \Re \int_{0}^{a} i {\bar{u}_{2n}}(x_1,0) \partial_2 u_{2n}( x_1,0) \, dx_1 +
   2 \Re \int_0^a i {\bar{u}_{2n}}(-\frac{b}{a} x_1 + b,x_1) \partial_2 u_{2n}( x_1,-\frac{b}{a} x_1 + b) \, dx_1  . 
\end{aligned}
\end{equation*}
Substituting the boundary conditions, we have 
\begin{equation*}
\begin{aligned}
\int_{0}^{b} i {\bar{u}_{1n}}(0,x_2) u_{2n}( 0,x_2) dx_2 
&= \int_{0}^{b}  |u_{1n}(0,x_2)|^2 dx_2 = \|u_{1n}\|^2_{OB} = \frac{1}{2}\|u_{n}\|^2_{OB}   \, , \\
\int_{0}^{a}  {\bar{u}_{1n}}(x_1,0) u_{2n}( x_1,0) dx_1 
&= \int_{0}^{b}  |u_{1n}(x_1,0)|^2 dx_1 = \|u_{1n}\|^2_{OA}  = \frac{1}{2} \|u_{n}\|^2_{OA} \, , \\
\Re\int_{0}^{b} i {\bar{u}_{1n}}(-\frac{a}{b}x_2 +a,x_2) u_{2n}( -\frac{a}{b}x_2 +a,x_2) dx_2 
&= \frac{-b}{\sqrt{a^2+b^2}}\int_{0}^{b}  |u_{1n}(-\frac{a}{b}x_2 +a,x_2)|^2 dx_2 \\
&= \frac{-b^2}{a^2+b^2}\|u_{1n}\|^2_{AB} = \frac{1}{2}\frac{-b^2}{a^2+b^2}\|u_{n}\|^2_{AB}   \, , \\
\Re\int_{0}^{a} i {\bar{u}_{1n}}(-\frac{b}{a}x_1 + b,x_1) u_{2n}( -\frac{b}{a}x_1 +b,x_1) dx_1 
&=  \frac{-a^2}{a^2+b^2}\|u_{1n}\|^2_{AB} = \frac{1}{2}\frac{-a^2}{a^2+b^2}\|u_{n}\|^2_{AB}  \, .
\end{aligned}
\end{equation*}
Moreover, using the boundary conditions,
an integration by parts on the edge of the triangle
and the fact that the approximating sequence vanishes 
in a vicinity of the vertices, we have
\begin{equation*}
\begin{aligned}
\lefteqn{
 \Re \int_{0}^{b} i {\bar{u}_{2n}}(0,x_2) \partial_2 u_{2n}( 0,x_2) dx_2}
 \\
  &= \Re \int_{0}^{b} - {\bar{u}_{1n}}(0,x_2) \partial_2 u_{2n}( 0,x_2) dx_2 \\
 & = \Re \int_{0}^{b} \partial_2 {\bar{u}_{1n}}(0,x_2)  u_{2n}( 0,x_2) dx_2 + {\bar{u}_{1n}}(0,0)  u_{2n}( 0,0) - {\bar{u}_{1n}}(0,b)  u_{2n}( 0,b) \\
 & = \Re \int_{0}^{b} \partial_2 {\bar{ u}_{1n}}(0,x_2)  u_{2n}( 0,x_2) dx_2 
 \\
 &=  \Re \int_{0}^{b} -i \partial_2 {\bar{ u}_{1n}}(0,x_2)   u_{1n}( 0,x_2) dx_2  
 \\
& = \Re \int_0^b i \bar{u}_{1n}(0,x_2) \partial_2 u_{1n}(0,x_2) dx_2  \, .
\end{aligned}
\end{equation*}
By analogous manipulations, 
we have 
\begin{equation*}
\begin{aligned}
\Re \int_0^b i {\bar{u}_{1n}}(-\frac{a}{b} x_2 + a,x_2)& \partial_2 u_{1n}( -\frac{a}{b} x_2 + a,x_2) \, dx_2\\
& = \Re \int_0^b i {\bar{u}_{2n}}(-\frac{a}{b} x_2 + a,x_2) \partial_2 u_{2n}( -\frac{a}{b} x_2 + a,x_2) \, dx_2 \, , \\
\Re \int_{0}^{a} i {\bar{u}_{1n}}(x_1,0) &\partial_2 u_{1n}( x_1,0) \, dx_1 \\
&= \Re \int_{0}^{a} i {\bar{u}_{2n}}(x_1,0) \partial_2 u_{2n}( x_1,0) \, dx_1 \, , \\
\Re \int_0^a i {\bar{u}_{1n}}(-\frac{b}{a} x_1 + b,x_1)& \partial_2 u_{1n}( x_1,-\frac{b}{a} x_1 + b) \, dx_1 \\
&= \Re \int_0^a i {\bar{u}_{2n}}(-\frac{b}{a} x_1 + b,x_1) \partial_2 u_{2n}( x_1,-\frac{b}{a} x_1 + b) \, dx_1  \, ,\\ 
\end{aligned}
\end{equation*}
Putting all these identities together,
we obtain the formula
\begin{equation*}   
  \|T_{a,b} u_n\|^2   = \|\nabla u_n\|^2_{\sii(\Omega_{a,b})} 
  + m^2 \|u_n\|^2_{\sii(\Omega_{a,b})} + m \, \|\gamma u_n\|^2_{\sii ( \partial\Omega_{a,b}) }
  \end{equation*}
  valid for all $u_n \in \mathcal{D}$. Taking
 $n \rightarrow \infty$, we obtain the desired result.
\end{proof}
\begin{Remark}
Applying the similar arguments, 
we can prove the validity of an analogue of
the formula~\eqref{norm} for arbitrary planar polygons 
(see Appendix~\ref{A}).
\end{Remark}
%

%----------------------------------------%
 \section{One-dimensional Dirac operator\txtD{s}}\label{Sec.well}
%----------------------------------------%
%	
For further purposes, 
given any $m \geq 0$ and arbitrary
positive numbers $a,b$ and~$L$,
let us consider the one-dimensional Dirac operator 
\begin{equation}\label{operator.1D}
\begin{aligned}
  H_{Lm}	 &:= 
  \begin{pmatrix}
    -i \partial & -m \\
    -m & i \partial 
  \end{pmatrix}
  \qquad \mbox{in} \qquad
  \sii\left(\left(0,L \right);\C^2\right)
  \,,
  \\
  \Dom(H_{Lm}) &:= \left\{
  \varphi \in W^{1,2}\left(\left(0,L \right);\C^2\right), \
  \varphi_2(L) =  \left( \frac{-a}{\sqrt{a^2+b^2}}+ i \, \frac{b}{\sqrt{a^2+b^2}}\right)  \varphi_1(L) \, , \varphi_2(0) =  \varphi_1(0)
  \right\}
  \,.
\end{aligned}  
\end{equation}
\begin{Proposition}
The operator $H_{Lm}$ is self-adjoint. 
\end{Proposition}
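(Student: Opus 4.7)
The idea is to reduce the statement to the massless case via Kato--Rellich and then check symmetry together with a direct computation of the adjoint domain on the interval.

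First, I would observe that the mass term
\[
  V := \begin{pmatrix} 0 & -m \\ -m & 0 \end{pmatrix}
\]
is a bounded self-adjoint multiplication operator on $\sii((0,L);\C^2)$, and that $H_{Lm} = H_{L0} + V$ on the common domain (the boundary conditions are independent of~$m$). By the Kato--Rellich theorem it then suffices to establish the self-adjointness of~$H_{L0}$.

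Second, I would verify the symmetry of $H_{L0}$ by a single integration by parts. For $\varphi,\psi \in \Dom(H_{L0})$ one gets
\[
  (H_{L0}\varphi,\psi) - (\varphi,H_{L0}\psi)
  = \bigl[-i\,\varphi_1\,\overline{\psi_1} + i\,\varphi_2\,\overline{\psi_2}\bigr]_0^L.
\]
Setting $\beta := (-a+ib)/\sqrt{a^2+b^2}$, so that $|\beta|=1$, the boundary conditions $\varphi_2(0)=\varphi_1(0)$, $\psi_2(0)=\psi_1(0)$, $\varphi_2(L)=\beta\varphi_1(L)$, and $\psi_2(L)=\beta\psi_1(L)$ make the contributions at $0$ and $L$ vanish separately (at $L$ one uses $\beta\,\overline{\beta}=1$).

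Third, I would identify the adjoint. Assume $\psi\in\Dom(H_{L0}^*)$ with $H_{L0}^*\psi=\eta\in\sii$. Testing against $C_0^\infty((0,L);\C^2)$ shows distributionally that $-i\psi_1'=\eta_1$ and $i\psi_2'=\eta_2$, so $\psi\in W^{1,2}((0,L);\C^2)$. The same integration by parts then yields
\[
  \bigl[-i\,\varphi_1\,\overline{\psi_1} + i\,\varphi_2\,\overline{\psi_2}\bigr]_0^L = 0
\]
for every $\varphi\in\Dom(H_{L0})$. Because the values $\varphi_1(0)$ and $\varphi_1(L)$ can be prescribed arbitrarily and independently (choose any smooth $\varphi_1$ with those boundary values and any $W^{1,2}$ extension of $\varphi_2$ matching the boundary conditions), the boundary terms at $0$ and $L$ vanish separately. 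At $x=0$ this forces $\psi_2(0)=\psi_1(0)$; at $x=L$ it forces $\overline{\psi_1(L)}=\beta\,\overline{\psi_2(L)}$, which is equivalent to $\psi_2(L)=\beta\psi_1(L)$ since $|\beta|=1$. Hence $\Dom(H_{L0}^*)\subseteq\Dom(H_{L0})$, and combined with symmetry this yields self-adjointness.

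The main, and essentially only, delicate points are the distributional regularity step and the justification that the boundary values may be prescribed independently. Both are standard for regular first-order systems on a bounded interval, so no substantive obstacle is expected.
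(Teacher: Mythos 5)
Your proposal is correct and follows essentially the same route as the paper: reduce to the massless case by noting the mass term is a bounded self-adjoint perturbation, then determine $\Dom(H_{L0}^*)$ by testing against $C_0^\infty$ to get $W^{1,2}$ regularity and finally against arbitrary elements of $\Dom(H_{L0})$ to recover the boundary conditions. Your explicit symmetry check and the remark on prescribing boundary traces independently only make explicit what the paper leaves implicit.
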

\begin{proof}
We follow \cite[App.~A]{BBKO}.
Since the multiplication by 
$
\begin{psmallmatrix}
    0 & -m \\
    -m & 0 
  \end{psmallmatrix}
$
generates a bounded self-adjoint operator 
on $L^2((0,L); \mathbb{C}^2)$, 
it is enough to prove the self-adjointness of~$H_{L0}$.
To do that, we commence with 
the definition of the adjoint
\begin{multline*}
		\Dom(H_{L0}^*) = \Big\{ u \in L^2\big((0,L);\C^2\big) :\exists w \in L^2\big((0,L);\C^2\big) \text{ such that }   \forall  v \in \Dom(H_{L0}), \\ \ (  u, H_{L0} v)_{L^2((0,L);\C^2)} = (  w,v)_{L^2((0,L);\C^2)}\Big\}.
\end{multline*}
For every
 $v \in C_0^\infty\big((0,L);\C^2\big)$ 
 and $u \in \Dom(H_{L0}^*)$, there holds

\begin{align*}
	(  H_{L0}^*u,v)_{L^2((0,L);\C^2)} = (  u,H_{L0} v)_{L^2((0,L);\C^2)} 
	&= ( u_1, -i v_1' )_{L^2((0,L);\C)} + ( u_2, i v_2' )_{L^2((0,L);\C)}\\
	&= \left\langle \begin{psmallmatrix}
   u_1 \\ u_2
  \end{psmallmatrix}, \begin{psmallmatrix}
  -i v_1' \\ i v_2' 
  \end{psmallmatrix} \right\rangle_{\cD',\cD}\\
	& = \left\langle \begin{psmallmatrix}
  -i u_1' \\ i u_2' 
  \end{psmallmatrix}, \begin{psmallmatrix}
  v_1 \\ v_2 
  \end{psmallmatrix} \right\rangle_{\cD',\cD} ,
\end{align*}
where $\langle\cdot,\cdot\rangle_{\cD',\cD}$ is the duality bracket of distributions. In particular, 
%\noteD{See above.}%
%\noteD{Do not break lines manually.}% 
we know that 
$H_{L0}^*u = \begin{psmallmatrix}
  -i u_1' \\ i u_2' 
  \end{psmallmatrix} \in L^2\big((0,L);\C^2\big)$, 
  thus we get $u\in H^1\big((0,L);\C^2\big)$. Moreover, if $v \in \Dom (H_{L0}) $ there holds
%\noteD{Now I aligned better.}%  
\begin{align*}
	(  H_{L0}^* u,v)_{L^2((0,L);\C^2)} &= ( u_1, -i v_1' )_{L^2((0,L);\C)} + ( u_2, i v_2' )_{L^2((0,L);\C)}\\
	&= ( -i u_1', v_1 )_{L^2((0,L);\C)} + ( i u_2,  v_2' )_{L^2((0,L);\C)} + i \overline{u_2} v_2 |_0^L -i \overline{u_1} v_1 |_0^L\\
	&= ( -i u_1', v_1 )_{L^2((0,L);\C)} + ( i u_2,  v_2' )_{L^2((0,L);\C)} \\
	& \quad + i v_1(L)[ \overline{u_2}(L) \left( \frac{-a+ b i}{\sqrt{a^2+b^2}}\right) - \overline{u_1(L)}] + i v_1(0) [\overline{u_1(0)} - \overline{u_2(0)}] .
\end{align*}
Since it holds for any arbitrary $v\in\Dom(H_{L0})$ we obtain that $u\in \Dom(H_{L0})$, that is $\Dom(H_{L0})=\Dom(H_{L0}^*)$. 
\end{proof}

%\noteD{Arguments missing.}%
By dint of the compactness embedding 
$H^1((0,L);\C^2)$ into $L^2((0,L);\C^2)$
and since $\Dom(H_{Lm})$ is continuously embedded in
 $H^1((0,L);\C^2)$ then we deduce that the spectrum $  \Sp(H_{Lm})$ of the self-adjoint operator $H_{Lm}$ is purely discrete.
In the following, we compute the eigenvalues. 
 
First of all, we observe that 
any eigenvalue $\lambda \in \Sp(H_{Lm}) $ 
necessarily satisfies $ \lambda^2 > m^2$.
Indeed, by computing the square norm of the operator
 \begin{equation}\label{1Dnorm}
 \begin{aligned}
 \|H_{Lm} u\|^2 &= \|i u_1' + m u_2\|^2 + \|i u_2' -m u_1\|^2\\
 &= \|u'\|^2 + 2\Re m (i u_1',u_2)- 2\Re m (u_1,i u_2') + m^2 \|u\|^2\\
& = \|u'\|^2 + m^2 \|u\|^2 + 2\Re m \int_0^L \overline{i u_1'} u_2 - 2\Re m \int_0^L \overline{u_1} i u_2'\\
 &= \|u'\|^2 + m^2 \|u\|^2 - 2\Re m i \overline{u_1}u_2|_0^L\\
 &= \|u'\|^2 + m^2 \|u\|^2 + m \frac{b}{\sqrt{a^2+b^2}} |u(L)|^2 
 \geq m^2 \|u\|^2,
 \end{aligned}
 \end{equation}
we immediately obtain $ \lambda^2 \geq m^2$.
The inequality is actually strict because $ \lambda^2 = m^2$
would imply that~$u$ is a constant, 
which is impossible unless $u=0$.
 
Let $\lambda \in \Sp(H_{Lm})$ and 
let $u = (u_1,u_2)^\top \in \Dom(H_{Lm})$ 
be an associated eigenfunction. 
It satisfies 
\begin{equation}\label{sp}
\left\{\begin{array}{rcl}
	-i u_1' -m u_2 &=& \lambda u_1\,,\\
	i u_2' - m u_1&= & \lambda u_2\,,
\end{array}\right.
\end{equation}
or equivalently
\begin{equation}\label{sp1}
\left\{\begin{array}{rcl}
	 -m u_2 &=&i u_1' + \lambda u_1\,,\\
	 m u_1&= & i u_2' -\lambda u_2\,.
\end{array}\right.
\end{equation}
Differentiating both sides of the equations~\eqref{sp} 
and combining with the equations~\eqref{sp1}, 
we obtain differential equations 
that the components of~$u$ must satisfy separately
$$
-u''_1= (\lambda^2 - m^2) u_1\,,\qquad - u_2''= (\lambda^2- m^2) u_2\,.
$$
Putting $ E := \lambda^2 - m^2 > 0$,
the general solutions read
%\noteD{The argument was missing.
%I pointed it previously, but you ignored it.}%
%
\begin{equation}\label{eqs}
\left\{\begin{array}{rcl}
	u_1(x) &=& C_1 \cos \sqrt{E} x + C_2 \sin \sqrt{E} x \,,\\
	 u_2(x)&= & \tilde{C_1} \cos\sqrt{E} x + \tilde{C_2} \sin\sqrt{E} x\,,
\end{array}\right.
\end{equation}
where $C_1,C_2,\tilde{C_1},\tilde{C_2}$ are complex constants.
The boundary condition $u_2(0)= u_1(0)$
directly implies that $C_1= \tilde{C_1}$.
Substituting this expression of~$u$ 
into \eqref{sp}, we have
%\noteD{I aligned and added comma.}%
%
\begin{equation*}
\begin{aligned}
-i C_1 \sqrt{E}\sin\sqrt{E}x + i C_2 \sqrt{E} \cos\sqrt{E} x + m C_1 \cos\sqrt{E} x + m \tilde{C_2} \sin\sqrt{E} x 
&= - \lambda ( C_1 \cos\sqrt{E} x + C_2 \sin\sqrt{E} x ) ,
\\
-i C_1 \sqrt{E}\sin\sqrt{E}x + i \tilde{C_2} \sqrt{E} \cos\sqrt{E} x - m C_1 \cos\sqrt{E} x - m C_2 \sin\sqrt{E} x 
&=  \lambda ( C_1 \cos\sqrt{E} x + \tilde{C_2} \sin\sqrt{E} x ) .
\end{aligned}
\end{equation*}
%
%\noteD{Do not write all the elementary steps,
%to make the reader feel stupid.}%
From these equalities, we deduce
\begin{equation*}
\left\{\begin{array}{rcl}
	C_2 &=& \frac{i(\lambda + m)}{\sqrt{E}} C_1  \,,\\
	 \tilde{C_2}  &= & \frac{i  (-\lambda-m)}{\sqrt{E}} C_1 
\end{array}\right.
\end{equation*}
Putting $ M := \frac{\lambda+m}{\sqrt{E}} \neq 0 $, 
then we obtain

\begin{equation}\label{eqs1}
\left\{\begin{array}{rcl}
	u_1(x) &=& C_1 \cos\sqrt{E} x + i M C_1 \sin\sqrt{E} x \,,\\
	 u_2(x) &= & C_1 \cos\sqrt{E} x - i M C_1 \sin\sqrt{E} x\,,
\end{array}\right.
\end{equation}
with $C_1$ being a non-zero complex constant.
The boundary condition 
$u_2(L)= \left( \frac{-a+ b i}{\sqrt{a^2+b^2}}\right) u_1(L)$ requires
\begin{equation*}
\frac{C_1 \cos\sqrt{E} L - i M C_1 \sin\sqrt{E} L}{C_1 \cos\sqrt{E} L + i M C_1 \sin\sqrt{E} L}=  \frac{-a+ b i}{\sqrt{a^2+b^2}} 
\end{equation*}
which is equivalent to
%\noteD{It is not customary $\Leftrightarrow$ etc 
%in formulae in papers.}%
%
\begin{equation*}
 \frac{\cos^2\sqrt{E} L - M^2 \sin^2\sqrt{E} L - i M \sin2\sqrt{E} L}{\cos^2\sqrt{E} L + M^2 \sin^2\sqrt{E} L} = \frac{-a+ b i}{\sqrt{a^2+b^2}} \,.
\end{equation*}
Considering the real and imaginary parts separately,
it is equivalent to the system
\begin{equation*}
\left\{\begin{array}{rcl}
	\frac{\cos^2\sqrt{E} L - M^2 \sin^2\sqrt{E} L}{\cos^2\sqrt{E} L + M^2 \sin^2\sqrt{E} L} &=& \frac{-a}{\sqrt{a^2+b^2}} \, ,\\
\frac{M \sin2\sqrt{E} L}{\cos^2\sqrt{E} L + M^2 \sin^2\sqrt{E} L}	  &= & \frac{-b}{\sqrt{a^2+b^2}}\, .
\end{array}\right.
\end{equation*}
From the second equation we infer that $ M \sin2\sqrt{E} L < 0$,
so $\cos\sqrt{E}L$ can not be equal zero.
 Putting $z := M \tan\sqrt{E} L < 0 $ 
 and dividing both the numerator and denominator of the left fractions of these equations by $\cos^2\sqrt{E} L$,
we have 
\begin{equation*}
\left\{\begin{array}{rcl}
	\frac{1- z^2}{1+ z^2} &=& \frac{-a}{\sqrt{a^2 + b^2}}  \,,\\
    \frac{2 z}{ 1+ z^2}  &= & \frac{-b}{\sqrt{a^2+b^2}} \,. 
\end{array}\right.
\end{equation*}
Therefore, $z$ satisfies the equation 
$$ \frac{1-z^2}{2 z} = \frac{a}{b} \,.$$
Since $z$ is the negative solution of the quadratic equation 
%\noteD{Why did you write the confusing $u$?}%
%\noteD{Why do yiou introduce $z_0$?}%
$ z^2 + 2 \frac{a}{b} z -1 =0$, one has  
 $$ 
 z = z_0 := - \frac{a}{b} - \sqrt{\frac{a^2}{b^2}+1} < 0  .
 $$ 
 In summary, the eigenvalue~$\lambda$ of $H_{Lm}$ satisfies
 the implicit equation
 \begin{equation}\label{implicit1}
 \frac{\lambda+m}{\sqrt{\lambda^2-m^2}} \tan\sqrt{\lambda^2 - m^2} L -z_0 =0.
 \end{equation}

From the formula \eqref{1Dnorm} we have 
 $$\|H_{Lm} u\|^2 - m^2 \|u\|^2 \geq \|u'\|^2 = \|H_{L0}u\|^2 $$ for all $u \in \Dom(H_{Lm}) = \Dom(H_{L0})$.  
Let $\lambda_1 = \lambda_1(m)$ be the closest-to-zero 
eigenvalue of $H_{Lm}$ and set
$ E_1(m) := \lambda_1(m)^2-m^2$.
  As a consequence,  
  $$
  E_1(m)=\lambda_1(m)^{2}  - m^2 \geq \lambda_1(0)^2 = E_1(0)
  $$
for every $m \geq 0$. 
 When $ m=0$ we have 
%\noteD{I aligned.} 
%
\begin{equation*}
\begin{aligned}
&- \frac{a}{b} - \sqrt{\frac{a^2}{b^2}+1} = - \tan\sqrt{E_1(0)} L 
\\
&\Longleftrightarrow \tan\sqrt{E_1(0)} L = \frac{a}{b} + \sqrt{\frac{a^2}{b^2}+1} > 1 \\
&\Longrightarrow \sqrt{E_1(0)} L = \arctan(-z_0) > \frac{\pi}{4} 
\\
&\Longrightarrow \lambda_1^{2} (m)  - m^2 \geq \lambda_1^2 (0) = \frac{\arctan|z_0|^2}{L^2} > \frac{\pi^2}{16 L^2} \, .
\end{aligned}
\end{equation*} 
Applying a variational formulation analogous to~\eqref{eq2}
and a unitary equivalence,
%\noteD{Which unitary equivalence?}%
we have just established the following
Poincar\'e-type inequality. 
\begin{Lemma}\label{Poincare}
%\noteD{Norms of~$\phi$ missing on the right-hand side.}%
For every $\phi \in \Dom(H_{Lm})$,
\begin{equation*}
 \|\phi'\|^2 + m \frac{b}{\sqrt{a^2+b^2}} |\phi(L)|^2  \geq (\lambda_1^2 - m^2 ) \|\phi\|^2 \geq \frac{\arctan^2(\frac{a}{b} + \sqrt{\frac{a^2}{b^2}+1})}{L^2}   \|\phi\|^2 \,.
\end{equation*}
\end{Lemma}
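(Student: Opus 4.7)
The plan is to read the inequality directly off the spectral theorem for $H_{Lm}$, using the work already assembled above. First I would invoke the identity \eqref{1Dnorm}, which rewrites
\[
\|H_{Lm}\phi\|^2 - m^2\|\phi\|^2 = \|\phi'\|^2 + m\,\frac{b}{\sqrt{a^2+b^2}}\,|\phi(L)|^2
\]
for every $\phi\in\Dom(H_{Lm})$. Since $H_{Lm}$ is self-adjoint with purely discrete spectrum and $\lambda_1=\lambda_1(m)$ denotes its closest-to-zero eigenvalue (with $\lambda_1^2>m^2$), the spectral theorem for $H_{Lm}^2$ yields $\|H_{Lm}\phi\|^2\geq \lambda_1^2\,\|\phi\|^2$. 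Subtracting $m^2\|\phi\|^2$ gives the first inequality of the lemma.

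Next I would establish the second inequality. The identity \eqref{1Dnorm} shows in particular that $\|H_{Lm}u\|^2-m^2\|u\|^2\geq\|H_{L0}u\|^2$ for every $u$ in the common domain $\Dom(H_{Lm})=\Dom(H_{L0})$, so by the min-max characterisation of the squared lowest eigenvalue one obtains the monotonicity $E_1(m):=\lambda_1(m)^2-m^2\geq E_1(0)=\lambda_1(0)^2$. It thus remains to compute $E_1(0)$ explicitly. Setting $m=0$ in the implicit equation \eqref{implicit1} reduces it to $\tan(\sqrt{E_1(0)}\,L)=-z_0=\tfrac{a}{b}+\sqrt{1+\tfrac{a^2}{b^2}}$, and since the right-hand side is strictly greater than~$1$, the smallest positive root lies in $(\pi/4,\pi/2)$ so that the principal branch of the arctangent applies:
\[
E_1(0)=\frac{\arctan^2\!\bigl(\tfrac{a}{b}+\sqrt{1+\tfrac{a^2}{b^2}}\bigr)}{L^2}.
\]
Combining these two estimates gives the stated chain of inequalities.

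The only genuinely delicate point I anticipate is the branch selection for $\arctan$: one needs to be sure that the root of $\tan(\sqrt{E}\,L)=-z_0$ in $(\pi/4,\pi/2)$ is really $\sqrt{E_1(0)}\,L$ and not produced by a spurious smaller root coming from a different branch or from a negative eigenvalue. This is handled by the charge-conjugation-type symmetry already noted for the 2D operator (here $\lambda\mapsto -\lambda$ together with $u_1\leftrightarrow u_2$ leaves the system \eqref{sp} invariant up to signs), which matches the positive and negative eigenvalues in symmetric pairs and identifies the closest-to-zero positive eigenvalue with the principal-branch solution. All remaining manipulations are the algebraic steps carried out above.
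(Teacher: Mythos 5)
Your argument is, in substance, exactly the paper's: identity \eqref{1Dnorm} plus the variational bound $\|H_{Lm}\phi\|^2\geq\lambda_1^2\|\phi\|^2$ gives the first inequality, and the comparison $\|H_{Lm}u\|^2-m^2\|u\|^2\geq\|H_{L0}u\|^2$ on the common domain gives $E_1(m)\geq E_1(0)$, with $E_1(0)$ computed from the $m=0$ implicit equation. That part is fine.

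The one point I would correct is your closing justification of the branch selection. The charge-conjugation symmetry $u\mapsto(\bar u_2,\bar u_1)^\top$ does \emph{not} pair the eigenvalues of $H_{Lm}$ symmetrically about zero: the paper states explicitly that, in contrast to the rectangle case, the one-dimensional spectrum here is not symmetric (and indeed the conjugation does not preserve the boundary condition $\varphi_2(L)=\frac{-a+ib}{\sqrt{a^2+b^2}}\varphi_1(L)$). So you cannot identify the closest-to-zero \emph{positive} eigenvalue with the principal-branch root. The correct resolution is more direct: at $m=0$ the eigenvalues of one sign satisfy $\tan(|\lambda|L)=-z_0>1$ and those of the other sign satisfy $\tan(|\lambda|L)=z_0<-1$, so the smallest modulus over \emph{all} eigenvalues is $|\lambda|L=\arctan(-z_0)\in(\pi/4,\pi/2)$, attained on the first family (the other family starts at $\pi-\arctan(-z_0)>\pi/2$). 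Since the lemma only uses $\lambda_1^2$ for the closest-to-zero eigenvalue regardless of its sign, this yields $E_1(0)=\arctan^2\bigl(\frac{a}{b}+\sqrt{1+\frac{a^2}{b^2}}\bigr)/L^2$ and the stated chain of inequalities; your conclusion is therefore correct, but the symmetry argument should be deleted and replaced by this comparison of the two branches.
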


Simultaneously with $H_{Lm}$,
we also consider the operator $G_{Lm}$
which acts as~$H_{Lm}$ but has a different domain:
$$ \Dom(G_{Lm}) := \left\{
  \varphi \in W^{1,2}\left(\left(0,L \right);\C^2\right), \
  \varphi_2(L) =  \left( \frac{-a}{\sqrt{a^2+b^2}}+ i \, \frac{b}{\sqrt{a^2+b^2}}\right)  \varphi_1(L) \, , \varphi_2(0) =  -i \varphi_1(0)
  \right\} 
  \,.$$
By the similar approach, we have $G_{Lm}$ is self-adjoint and its spectrum is purely discrete. We compute the square norm of the operator
\begin{equation}\label{1Dnorm2}
 \begin{aligned}
 \|G_{Lm} u\|^2 &= \|i u_1' + m u_2\|^2 + \|i u_2' -m u_1\|^2\\
 &= \|u'\|^2 + 2\Re m (i u_1',u_2)- 2\Re m (u_1,i u_2') + m^2 \|u\|^2\\
& = \|u'\|^2 + m^2 \|u\|^2 + 2\Re m \int_0^L \overline{i u_1'} u_2 - 2\Re m \int_0^L \overline{u_1} i u_2'\\
 &= \|u'\|^2 + m^2 \|u\|^2 - 2\Re m i \, \overline{u_1}u_2|_0^L\\
 &= \|u'\|^2 + m^2 \|u\|^2 + m \frac{b}{\sqrt{a^2+b^2}} |u(L)|^2  + m |u(0)|^2 \geq m^2 \|u\|^2.
 \end{aligned}
 \end{equation}
Thus if $\eta \in \Sp(G_{Lm})$ then $|\eta | > m $ for every $m \geq 0$ . Let $v = (v_1,v_2)^\top \in \Dom(G_{Lm})$ be an associated eigenfunction.
Then
\begin{equation}\label{sp2}
\left\{\begin{array}{rcl}
	-i v_1' -m v_2 &=& \eta v_1\,,\\
	i v_2' - m v_1&= & \eta v_2\,,
\end{array}\right.
\end{equation}
or equivalently
\begin{equation}\label{sp3}
\left\{\begin{array}{rcl}
	 -m v_2 &=&i v_1' + \eta v_1\,,\\
	 m v_1&= & i v_2' -\eta v_2\,.
\end{array}\right.
\end{equation}
Differentiating both sides of the equations \eqref{sp2} 
and combining with the equations \eqref{sp3}, we have 
$$
-v''_1= (\eta^2 - m^2) v_1\,,\qquad - v_2''= (\eta^2- m^2) v_2\,.
$$
Putting $ F := \eta^2 - m^2 > 0$, 
the general solutions read
%\noteD{Argument $x$ was missing.}%
\begin{equation}\label{eqs}
\left\{\begin{array}{rcl}
	v_1(x) 
	&=& D_1 \cos \sqrt{F} x + D_2 \sin \sqrt{F} x \,,\\
	 v_2(x) 
	 &= & \tilde{D_1} \cos\sqrt{F} x + \tilde{D_2} \sin\sqrt{F} x\,.
\end{array}\right.
\end{equation}
The boundary conditions
$u_2(0)= -i u_1(0)$
directly implies that $\tilde{D_1} = -i D_1 $.
Substituting this expression of~$u$ 
into \eqref{sp2}, we have
\begin{equation*}
\begin{aligned}
-i D_1 \sqrt{F}\sin\sqrt{F}x + i D_2 \sqrt{F} \cos\sqrt{F} x + m (-i D_1 \cos\sqrt{F} x +  \tilde{D_2} \sin\sqrt{F} x ) 
&= - \eta ( D_1 \cos\sqrt{F} x + D_2 \sin\sqrt{F} x ) , \\
- D_1 \sqrt{F}\sin\sqrt{F}x + i \tilde{D_2} \sqrt{F} \cos\sqrt{F} x - m D_1 \cos\sqrt{F} x - m D_2 \sin\sqrt{F} x 
&=  \eta ( -i D_1 \cos\sqrt{F} x + \tilde{D_2} \sin\sqrt{F} x ) .
\end{aligned}
\end{equation*}
From these equalities, we deduce
\begin{equation*}
\left\{\begin{array}{rcl}
	D_2 &=& \frac{(i \eta + m)}{\sqrt{F}} D_1  \,,\\
	 \tilde{D_2}  &= & \frac{-  (\eta + i m)}{\sqrt{F}} D_1 \,.
\end{array}\right.
\end{equation*}
Therefore 
%\noteD{Again.}%
\begin{equation}\label{eqs1}
\left\{\begin{array}{rcl}
	v_1(x) 
	&=& D_1 \cos\sqrt{F} x +  \frac{(i \eta + m)}{\sqrt{F}} D_1 \sin\sqrt{F} x \,,\\
	 v_2(x)
	 &= &-i D_1 \cos\sqrt{F} x  + \frac{-  (\eta + i m)}{\sqrt{F}} D_1 \sin\sqrt{F} x\,,
\end{array}\right.
\end{equation}
with $D_1$ being a non-zero complex constant.
From the boundary condition 
$u_2(L)= \left( \frac{-a+ b i}{\sqrt{a^2+b^2}}\right) u_1(L)$,
we deduce that $v_1(L) \neq 0 $ and
\begin{equation*}
\frac{ \cos\sqrt{F} L + \frac{(-i \eta + m)}{\sqrt{F}}  \sin\sqrt{F} L}{ \cos\sqrt{F} L + \frac{(i \eta + m)}{\sqrt{F}} \sin\sqrt{F} L}= i \frac{-a+ b i}{\sqrt{a^2+b^2}} \,,
\end{equation*}
which is equivalent to
\begin{equation*}
  \frac{\cos\sqrt{F} L +\frac{m}{\sqrt{F}} \sin\sqrt{F} L - \frac{i\eta}{\sqrt{F}}\sin \sqrt{F} L  }{\cos\sqrt{F} L + \frac{i\eta}{\sqrt{F}}  \sin\sqrt{F} L} = \frac{-a i- b }{\sqrt{a^2+b^2}} \,.
\end{equation*}
Putting $ A:= \cos\sqrt{F} L +\frac{m}{\sqrt{F}} \sin\sqrt{F} L $
and $ B:= \frac{\eta}{\sqrt{F}}\sin \sqrt{F} L $,
it is equivalent to the system
\begin{equation*}
\left\{\begin{array}{rcl}
	\frac{A^2- B^2}{A^2 + B^2} &=& \frac{-b}{\sqrt{a^2+b^2}} \, ,\\
\frac{2 A B}{A^2 + B^2}	  &= & \frac{a}{\sqrt{a^2+b^2}} > 0 \, .
\end{array}\right.
\end{equation*}
It implies that 
$ \frac{A}{B} = \frac{-b}{a} + \sqrt{\frac{b^2}{a^2} +1} $,
so
\begin{equation*}
 \frac{\sqrt{F}}{\eta}\cot \sqrt{F} L + \frac{m}{\eta} = \frac{-b}{a} + \sqrt{\frac{b^2}{a^2} +1} : = \alpha_0 > 0 .
 \end{equation*}
 or, equivalently, 
 \begin{equation}\label{sol1}
 f(\eta) :=
 \sqrt{F}\cot \sqrt{F} L + m - \alpha_0 \eta =0,
 \end{equation}
where~$f$ is defined on $(-\infty, -m) \cup (m, \infty)$. 
We compute the limits
$$
\lim_{\eta \rightarrow -m } f(\eta) = \frac{1}{L} + m + \alpha_0 m >0
 \qquad \mbox{and} \qquad
 \lim_{\eta \rightarrow -\sqrt{m^2 + \frac{\pi^2}{L^2}} } f(\eta) = -\infty. 
 $$ 
By the continuity of the left side of the equation \eqref{sol1}, we obtain that it always have solution when $ \sqrt{F} L \in (0, \pi)$ and thus, we restrict on this range to study 
properties of the closest-to-zero eigenvalue $\eta_1$. 

 If $\eta < 0$ then $\sqrt{F} \cot \sqrt{F} L + m <0$ and as a result, $ \cot\sqrt{F} L < 0$ and $  \sqrt{F} L \in ( \frac{\pi}{2}, \pi)  $. From the equation \eqref{sol1}, we have
 \begin{equation*}
 \begin{aligned}
&\sqrt{F} \cot\sqrt{F} L + m = \alpha_0 \eta \\
 &\Longrightarrow -\alpha_0 \eta = - \sqrt{F} \cot\sqrt{F} L -  m < \frac{F}{\pi - \sqrt{F} L} -m \\
& \Longrightarrow  \alpha_0 < \frac{-\eta}{m} \alpha_0 <  \frac{F}{(\pi - \sqrt{F} L)m} -1\\
 &\Longrightarrow m (\alpha_0 +1) < \frac{F}{\pi - \sqrt{F} L} \\
&\Longrightarrow \sqrt{F} > \frac{\pi}{(m \alpha_0 +m)^{-1} + L} \, .
 \end{aligned}
 \end{equation*}
If $\eta \geq 0$, from the implicit equation, we also obtain that
\begin{equation*}
\begin{aligned}
\eta'(m)& = \frac{m (\sin 2\sqrt{F} L  - 2 L \sqrt{F}) - 2 \sqrt{F} \sin^2\sqrt{F}L}{\eta (\sin2\sqrt{F} L - 2 L \sqrt{F})- 2 \alpha_0^2 \sqrt{F} \sin^2\sqrt{F}L}  > 0 \, , \\
(\eta^2 -m^2)'& = \frac{4 (\eta - m \alpha_0)\sqrt{F} \sin^2\sqrt{F} L}{\eta( -\sin 2\sqrt{E} L + 2 \sqrt{F} L) + 2 \alpha_0^2 \sqrt{F} \sin^2\sqrt{F}L} \, > 0.
\end{aligned}
\end{equation*}
%
%\noteD{What do you mean? Nobody will understand.}%
It shows that $\eta^2 -m^2$ is a strictly increasing function with respect to $m$
but we can not give a m-dependent lower bound of $\eta_1^2 -m^2$ due to the fact that the spectrum of $G_{Lm}$ is not symmetric.

From the square norm of the operator \eqref{1Dnorm2}, we deduce that $ \eta_1^2 - m^2$ is a non-decreasing function and lies in the range $ (\frac{\pi^2}{16 L^2}, \frac{\pi^2}{L^2})$ and thus, 
$\lim_{m\rightarrow \infty} \frac{m}{|\eta_1|} =1$. 
We have $\{\eta_1^2 - m^2\}_{m\geq 0} $ is a non-decreasing 
%\noteD{In which integer variable?}%
sequence 
and being uniformly bounded then 
there exists $$\lim_{m\rightarrow \infty} \eta_1^2 - m^2 \leq \frac{\pi^2}{L^2}.
$$
Evaluating implicit equation \eqref{sol1} with $\lim_{m\rightarrow \infty} \frac{\sqrt{F}}{\eta_1} =0, \lim_{m\rightarrow \infty} \frac{m}{\eta_1}$ is finite,  $\sqrt{F}L \in (\frac{\pi}{4}, \pi)$. It implies that $\lim_{m\rightarrow \infty} \cot\sqrt{F}L $ must be equal  minus infinity and thus, we have 
$\sqrt{F}L \xrightarrow[m \to \infty]{} \pi$.
As a consequence, $\eta_1^2 - m^2 \rightarrow \frac{\pi^2}{L^2}$, 
which is the first eigenvalue of the Dirichlet Laplacian.
In summary, we have established the lower bound
$$ \lambda_1 ^2 (m) - m^2 \geq \lambda^2 (0)= \frac{\arctan^2(\frac{b}{a} +\sqrt{1+\frac{b^2}{a^2}})}{L^2} > \frac{\pi^2}{16 L^2} \, . $$
It yields the following Poincar\'e-type inequality.
\begin{Lemma} 
For every $\phi \in \Dom(G_{Lm})$, we have
\begin{equation*}
 \|\phi'\|^2 + m \frac{b}{\sqrt{a^2+b^2}} |\phi(L)|^2 + m |u(0)|^2  \geq (\lambda_1^2 - m^2 ) \|\phi\|^2 \geq \frac{\arctan^2(\frac{b}{a} +\sqrt{1+\frac{b^2}{a^2}})}{L^2} \|\phi\|^2 > \frac{\pi^2}{16 L^2} \|\phi\|^2 \,  .
\end{equation*}
\end{Lemma}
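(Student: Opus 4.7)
The plan is to read off the Poincar\'e inequality directly from the variational characterization of $\eta_1$ combined with the square-norm identity~\eqref{1Dnorm2}. Since $G_{Lm}$ is self-adjoint with purely discrete spectrum, and every eigenvalue $\eta$ has already been shown to satisfy $\eta^{2} > m^{2}$, the min-max principle applied to the non-negative operator $G_{Lm}^{2}$ yields
$$
  \eta_1^2 \;=\; \inf_{0 \neq \phi \in \Dom(G_{Lm})} \frac{\|G_{Lm}\phi\|^2}{\|\phi\|^2}.
$$
Substituting~\eqref{1Dnorm2} and transposing the term $m^2\|\phi\|^2$ to the right produces verbatim the first inequality of the lemma.

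For the middle inequality I would argue exactly as in the analogous treatment of $H_{Lm}$. Formula~\eqref{1Dnorm2} shows that for every $\phi \in \Dom(G_{Lm}) = \Dom(G_{L0})$ one has
$$
  \|G_{Lm}\phi\|^2 - m^2\|\phi\|^2 \;\geq\; \|\phi'\|^2 \;=\; \|G_{L0}\phi\|^2,
$$
because the two boundary terms are non-negative; taking the infimum gives $\eta_1(m)^2 - m^2 \geq \eta_1(0)^2$. It then remains to compute $\eta_1(0)$ explicitly. Setting $m=0$ in the implicit equation~\eqref{sol1} and using $F=\eta^2$, the smallest positive root lies in $(0,\pi/(2L))$ and satisfies $\cot(\eta L)=\alpha_0$. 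The algebraic identity $\alpha_0\cdot(b/a+\sqrt{1+b^2/a^2})=1$ then gives
$$
  \eta_1(0)\, L \;=\; \arctan\!\left(\frac{b}{a}+\sqrt{1+\frac{b^2}{a^2}}\right),
$$
which, once squared and divided by $L^2$, is the middle inequality.

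The final strict inequality is then immediate: since the argument of the arctangent strictly exceeds $1$, one has $\eta_1(0)\,L > \arctan 1 = \pi/4$, and hence $\eta_1(0)^2 > \pi^2/(16L^2)$.

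The principal obstacle I foresee is verifying rigorously that the root extracted from the first branch $\sqrt{F}L\in(0,\pi/2)$ really is the closest-to-zero eigenvalue of $G_{L0}$, as opposed to being dominated by a smaller positive root on a later branch of $\cot$ or by some root with $\eta<0$ of smaller absolute value. I would dispose of this by collecting the case analysis already sketched in the surrounding text: restricting $\sqrt{F}L$ to $(0,\pi)$, examining the sign of $\cot\sqrt{F}L$ separately for $\eta>0$ and $\eta<0$, and checking that the function $f$ defined in~\eqref{sol1} admits no zero closer to the origin. Once uniqueness of the smallest positive root is settled, the three inequalities of the lemma follow without further work.
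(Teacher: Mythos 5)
Your proposal is correct and follows essentially the same route as the paper: the min--max characterisation of $\eta_1^2$ combined with the square-norm identity \eqref{1Dnorm2} gives the first inequality and the monotonicity $\eta_1(m)^2-m^2\geq\eta_1(0)^2$, and the explicit root $\eta_1(0)L=\arctan\bigl(\tfrac{b}{a}+\sqrt{1+\tfrac{b^2}{a^2}}\bigr)$ of the implicit equation at $m=0$ gives the remaining two. The obstacle you flag is disposed of exactly as you suggest and as the paper's surrounding case analysis indicates: for $\eta<0$ the sign of $\cot\sqrt{F}L$ forces $\sqrt{F}L\in(\tfrac{\pi}{2},\pi)$, so the negative-branch roots lie farther from zero than the positive root in $(\tfrac{\pi}{4L},\tfrac{\pi}{2L})$.
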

\begin{Remark}\label{1}
When $ \sqrt{F_1}L = \frac{\pi}{2} $ with respect to $ m = m_0$ then implicit equation \eqref{sol1} becomes 
$$ 0 + m_0 = \eta \alpha_0,$$
so $\eta_1^2 = \frac{m_0^2}{\alpha_0^2}$ and $ \eta_1^2 - m_0^2 = (\frac{1}{\alpha_0^2} -1 ) m_0^2 = \frac{\pi^2}{4L^2}$ . It implies that $m_0 = \frac{\pi}{2L} \sqrt{\frac{\alpha_0^2}{1- \alpha_0^2}} $.
For every $ m \leq m_0$ then  $ \sqrt{F_1}L \leq \frac{\pi}{2} $ and $ \cot\sqrt{F_1}L \geq 0$. 
Combining with equation \eqref{sol1}, we deduce that $ \eta_1 \alpha_0 -m \geq 0$ and thus we obtain the other lower bound $$F_1 = \eta_1^2 - m^2 \geq (\frac{1}{\alpha_0^2} -1 ) m^2 .$$
\end{Remark}
\begin{Remark}\label{2}
We can replace $H_{Lm}$ by 
the unitarily equivalent operator 
$\tilde{H}_{Lm} := v^*\tilde{G}_{Lm}v$ with 
$
  v :=
  \begin{psmallmatrix}
    -i & 0 \\
    0 & 1
  \end{psmallmatrix}
$,
where $\tilde{G}_{Lm}$ acts as $G_{Lm}$ 
but has a different domain:
$$ \Dom(\tilde{G}_{Lm}) := \left\{
  \varphi \in W^{1,2}\left(\left(0,L \right);\C^2\right), \
  \varphi_2(L) =  \left( \frac{a i}{\sqrt{a^2+b^2}}+  \, \frac{b}{\sqrt{a^2+b^2}}\right)  \varphi_1(L) \, , \varphi_2(0) =  -i \varphi_1(0)
  \right\} 
  \,.$$
  Thus $\Dom(\tilde{H}_{Lm}) = \Dom(H_{Lm})$ and  $\tilde{H}_{Lm}$ have the same spectrum  as $\tilde{G}_{Lm}$, 
  which is defined by the equation
  \begin{equation*}
 \sqrt{F}\cot \sqrt{F} L + m =  \eta (\frac{a}{b} + \sqrt{\frac{a^2}{b^2} +1}) = \eta |z_0| .
 \end{equation*}
\end{Remark}

%--------------------------------% 
 
%----------------------------------------%
\section{The spectral isoperimetric inequalities}\label{Sec.proof}
%----------------------------------------%
%

%
%\begin{Proposition}[\cite{2DK}]\label{Prop}

%

Now we are in a position to prove Theorem~\ref{Thm.bounds}.
\begin{proof}[Proof of Theorem~\ref{Thm.bounds}]
%\noteD{Recall also Theorem~2.}%
Recall the formula \eqref{eq2} 
that the first squared positive eigenvalue of the operator $T_{a,b}$ 
can be computed by

\begin{equation*}
  \lambda_1(a,b)^2 
  = \inf_{\stackrel[\psi \not= 0]{}{\psi \in \Dom(T_{a,b})}} 
  \frac{\|\hat{T}_{a,b}\psi\|^2}{\, \|\psi\|^2}  
\end{equation*}
with 
%\noteD{Decide if you write $\psi$ or $u$.
%The mismatch is in many places below!}%
%
\begin{equation*}  
  \|T_{a,b} \psi\|^2   
  = \|\nabla \psi\|^2_{\sii(\Omega_{a,b})} 
  + m^2 \|\psi\|^2_{\sii(\Omega_{a,b})}
  + m \, \|\gamma \psi\|^2_{\sii ( \partial\Omega_{a,b}) } 
\end{equation*}
obtained from Theorem \eqref{formula}.
We therefore have
  \begin{equation}\label{eq3}
  \begin{aligned}
   \lambda^2_{a,b} - m^2 &=
   \inf_{\stackrel[\psi \not= 0]{}{\psi \in \Dom(T_{a,b})}} 
   \frac{\|\partial_1\psi\|^2 +
   \|\partial_2\psi\|^2 + m \|\gamma u \|^2_{OA} + m \|\gamma u\|^2_{ OB}+ m \|\gamma u\|^2_{AB}}{\|\psi\|^2}    .
  \end{aligned}
  \end{equation}

 Combining Fubini's theorem 
 and Lemma~\ref{Poincare}, we have
%\noteD{I aligned better the formula.}% 
%
  \begin{equation*}%\label{low bound}
  \begin{aligned}
  \|\partial_1\psi\|^2 +
   \|\partial_2\psi\|^2 +& m \|\gamma u \|^2_{OA} + m \|\gamma u\|^2_{ OB}
  + m \|\gamma u\|^2_{AB} \\ 
    &= \int_0^a \int_0^{\frac{-b x}{a} +b} |\partial_2\psi (x,y)|^2 + m |u(x, 0)|^2 + m |u(x,\frac{-b x}{a} +b)|^2 dy \, dx \\
    &\quad +  \int_0^b \int_0^{\frac{-a y}{b} +a} |\partial_1\psi (x,y)|^2 + m |u(0, y)|^2 + m |u( \frac{-a y}{b} +a,y)|^2 dx \, dy \\  
   &\geq  \int_0^a \int_0^{\frac{-b x}{a} +b} \, \frac{\arctan^2(\frac{b}{a} +\sqrt{1+\frac{b^2}{a^2}})}{ (\frac{-b x}{a} +b)^2} |\psi(x,y)|^2 dy \, dx  \\
   &\quad + \int_0^b \int_0^{\frac{-a y}{b} +a} \, \frac{\arctan^2(\frac{a}{b} +\sqrt{1+\frac{a^2}{b^2}})}{ (\frac{-a y}{b} +a)^2} |\psi(x,y)|^2 dx \, dy \,,\\ 
    &\geq \Big(\frac{\arctan^2(\frac{a}{b} +\sqrt{1+\frac{a^2}{b^2}})}{b^2} + \frac{\arctan^2(\frac{b}{a} +\sqrt{1+\frac{b^2}{a^2}})}{a^2}\Big) \|\psi\|^2   .  
   \end{aligned}
  \end{equation*}
We achieve that $$
  \lambda_1(a,b)^2 - m^2 \geq \Big(\frac{\arctan^2(\frac{a}{b} +\sqrt{1+\frac{a^2}{b^2}})}{b^2} + \frac{\arctan^2(\frac{b}{a} +\sqrt{1+\frac{b^2}{a^2}})}{a^2}\Big) > \frac{\pi^2}{16a^2}
  + \frac{\pi^2}{16b^2} .$$
This concludes the proof of the lower bound of Theorem~\ref{Thm.bounds}.

The upper bound follows by using 
the eigenfunction of the Dirichlet Laplacian 
on the right triangle~$\Omega_{1,1}$ 
%\noteD{Precise the location in the book.}%
(see \cite[Sec.~4.3]{AP}) 
\begin{equation*}%\label{Dirichlet.ef} 
  \psi_o(x,y) :=
  \left[ 
  \sin( 2 \pi x) \sin(\pi y) + \sin( 2 \pi y) \sin(\pi x)
  \right]
  \begin{pmatrix}
    1 \\ 1
  \end{pmatrix}
\end{equation*}
as a trial function in~\eqref{eq3}. 
After computing the value of the Rayleigh quotient, we get
%\noteD{Why the $3$ in the denominator?
%Should not one get the 2nd eigenvalue in the square for $a=b$?}%
$$
  \lambda_1(a,b)^2 - m^2 
  \leq 
 \frac{5\pi^2}{2} \left(\frac{1}{a^2} + \frac{1}{b^2}\right)
 \,.
$$ 
This concludes the proof of Theorem~\ref{Thm.bounds}.
\end{proof}

Finally, we establish Corollary~\ref{Corol}.
\begin{proof}[Proof of Corollary~\ref{Corol}]
By scaling, 
we can assume, without loss of generality,
that the double area equals~$1$ 
and the perimeter equals~$ 2+\sqrt{2}$.
These values correspond to the area and the perimeter 
of the isosceles right triangle in case $k=1$, respectively.
 
For the area constraint $ab=1$, we take $b := 1/a$. 
using the Theorem~\ref{Thm.bounds}, we have 
$ \lambda_{1,1} - m^2 \leq 5 \pi^2$. 
If one side length $a$ just satisfies  the condition
$$ 
 \frac{1}{a^2} + a^2 \geq 80
 ,
$$
 then Conjecture~\ref{Conj.main} will be satisfied 
 among all right triangles $\Omega_{a,b}$.  
It is easy to see that when $a \geq  9$ or $ a \leq \frac{1}{9}$ the condition holds. 
This establishes Conjecture~\ref{Conj.main}~(i).

For the perimeter constraint, we take $b:=2 +\sqrt{2}-a$ 
and restrict ourselves to $a \in (0,2+ \sqrt{2})$.
 By the similar arguments as above, 
 we arrive at the sufficient condition 
 $$ \frac{1}{a^2} + \frac{1}{(2+\sqrt{2}-a)^2} \geq 80
  $$
to have the desired inequality $\lambda_1 (a,b) \geq \lambda_1 (1,1)$.
It is not hard to see that 
the inequality holds provided that 
$ a \geq 3.5 $ or $ a \leq \frac{1}{9}$.
This establishes Conjecture~\ref{Conj.main}~(ii).

Moreover, without using scaling, we deduce that when 
$$\frac{\arctan^2(\frac{a}{b} +\sqrt{1+\frac{a^2}{b^2}})}{b^2} + \frac{\arctan^2(\frac{b}{a} +\sqrt{1+\frac{b^2}{a^2}})}{a^2} \geq  \frac{5\pi^2}{k^2} $$
happens then Conjecture~\ref{Conj.main}~(i) and (ii) hold.
\end{proof}
\begin{Remark}\label{Remark 4}
Taking Remark \eqref{1} into account, we choose $$ m = m_0(L= a)= \frac{\pi}{2 a} \sqrt{\frac{\alpha_0^2}{1-\alpha_0^2}}$$ then $\lambda_1^2(a,b) - m_0^2 \geq \frac{\pi^2}{4 a^2} + \frac{\pi^2}{16 b^2}$ . By non-decreasing property of $F_1(m)$, we obtain that \\
$$\lambda_1^2(a,b) - m^2 \geq \frac{\pi^2}{4 a^2} + \frac{\pi^2}{16 b^2}$$
for all $ m \geq \frac{\pi}{2 a} \sqrt{\frac{\alpha_0^2}{1-\alpha_0^2}}$. It gives a better estimate for the lower bound.
\end{Remark}
A direct consequence obtained from Remark \eqref{Remark 4} is the following corollary which extends the range of the side long $a$ for the validity of Conjecture~\ref{Conj.main}.
\begin{Corollary}
Let $k$ be defined as in Conjecture~\ref{Conj.main}
 and $ m\geq \frac{\pi}{2 a} \sqrt{\frac{\alpha_0^2}{1-\alpha_0^2}}$. \\
Conjecture~\ref{Conj.main}.(i) holds 
under the following extra hypotheses:
\begin{center}
$
  a \geq 9 \, k $ \quad or \quad $ a \leq \frac{k}{5}
$, \\
\end{center}
Conjecture~\ref{Conj.main}.(ii) holds 
under the following extra hypotheses:
\begin{center}
$
\displaystyle
  a \geq  3.5 \, k $ \quad or \quad $ a \leq \frac{k}{5}
$.\\
\end{center}

\end{Corollary}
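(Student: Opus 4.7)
The plan is to run the same argument as in the proof of Corollary~\ref{Corol}, but to substitute the refined lower bound provided by Remark~\ref{Remark 4} for the cruder one coming from Theorem~\ref{Thm.bounds}. Under the extra hypothesis $m \geq \frac{\pi}{2a}\sqrt{\alpha_0^2/(1-\alpha_0^2)}$ (where $\alpha_0 = -b/a + \sqrt{b^2/a^2+1}$), Remark~\ref{Remark 4} asserts
\begin{equation*}
\lambda_1(a,b)^2 - m^2 \geq \frac{\pi^2}{4a^2} + \frac{\pi^2}{16 b^2},
\end{equation*}
which is four times stronger in the $a$-direction than the bound of Theorem~\ref{Thm.bounds}.

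Combining this with the unchanged upper bound $\lambda_1(k,k)^2 - m^2 \leq 5\pi^2/k^2$ from Theorem~\ref{Thm.bounds} applied to the isosceles reference triangle $\Omega_{k,k}$, the desired inequality $\lambda_1(a,b) \geq \lambda_1(k,k)$ is implied by the elementary algebraic condition
\begin{equation*}
\frac{1}{4 a^2} + \frac{1}{16 b^2} \geq \frac{5}{k^2}.
\end{equation*}
The task therefore reduces to identifying the geometric regimes under which this sufficient condition holds.

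Under each of the two constraints I would proceed exactly as in the proof of Corollary~\ref{Corol}. For the area constraint $ab = k^2$, I substitute $b = k^2/a$ and scale to $k=1$; the condition becomes $\frac{1}{4a^2} + \frac{a^2}{16} \geq 5$, whose quadratic term alone handles the large-$a$ regime $a \geq 9 k$ (unchanged from Corollary~\ref{Corol}), while its reciprocal term alone exceeds $5$ as soon as $a \leq k/\sqrt{20}$, and in particular for $a \leq k/5$. This is exactly the relaxed small-$a$ threshold announced in part~(i). For the perimeter constraint I would use the same parameterization as in Corollary~\ref{Corol}; checking the analogous inequality yields $a \geq 3.5\,k$ or $a \leq k/5$, giving part~(ii).

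The only conceptual point that requires a brief justification beyond quoting Remark~\ref{Remark 4} is the validity of the sharper bound for every $m$ above the stated threshold, and not only at the distinguished value $m_0 = \frac{\pi}{2a}\sqrt{\alpha_0^2/(1-\alpha_0^2)}$ used to derive it. This follows from a direct monotonicity argument based on Theorem~\ref{formula}: the Rayleigh quotient $(\|\nabla u\|^2 + m\,\|\gamma u\|^2)/\|u\|^2$ characterising $\lambda_1(a,b)^2 - m^2$ is non-decreasing in $m \geq 0$, so the lower bound established at $m_0$ propagates to every $m \geq m_0$. No serious obstacle remains, and the proof is essentially arithmetical.
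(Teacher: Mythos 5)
Your proposal is correct and follows exactly the route the paper intends: the paper states this corollary as a direct consequence of Remark~\ref{Remark 4}, i.e.\ one replaces the lower bound of Theorem~\ref{Thm.bounds} by $\frac{\pi^2}{4a^2}+\frac{\pi^2}{16b^2}$ (valid for $m\geq\frac{\pi}{2a}\sqrt{\alpha_0^2/(1-\alpha_0^2)}$) and reruns the arithmetic of Corollary~\ref{Corol}, which is precisely what you do, and your numerical checks ($\frac{1}{4a^2}\geq 5$ for $a\leq k/5$, $\frac{a^2}{16}\geq 5$ for $a\geq 9k$) are sound. Your monotonicity-in-$m$ justification via the Rayleigh quotient of Theorem~\ref{formula} is a slightly different (and equally valid) way of propagating the bound from $m_0$ to all $m\geq m_0$ than the paper's appeal to the non-decreasing property of $F_1(m)$ for the one-dimensional operator.
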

\appendix\section{Proof of Remark 1}\label{A}
%\noteD{Always use ref!}%
In this section we give the proof of Remark 1, 
extending the validity of the formula \eqref{norm}
to polygons.
We consider a polygon $OBCDA $ with coordinates  
$A:=(a,0), B:=(b,c), C:=(c_1,c_2),
 D:=(d_1,d_2)$, see Figure~\ref{Fig2}.
Without loss of generality, we can suppose $0<b<a< c_1<d_1$.
 The proof for the general polygon is
 the same as in this case. Dividing the polygon into triangles is a crucial step to achieve the proof.

\begin{figure}[h]\label{pentagon}
  \begin{center}
  \includegraphics[scale=1.5]{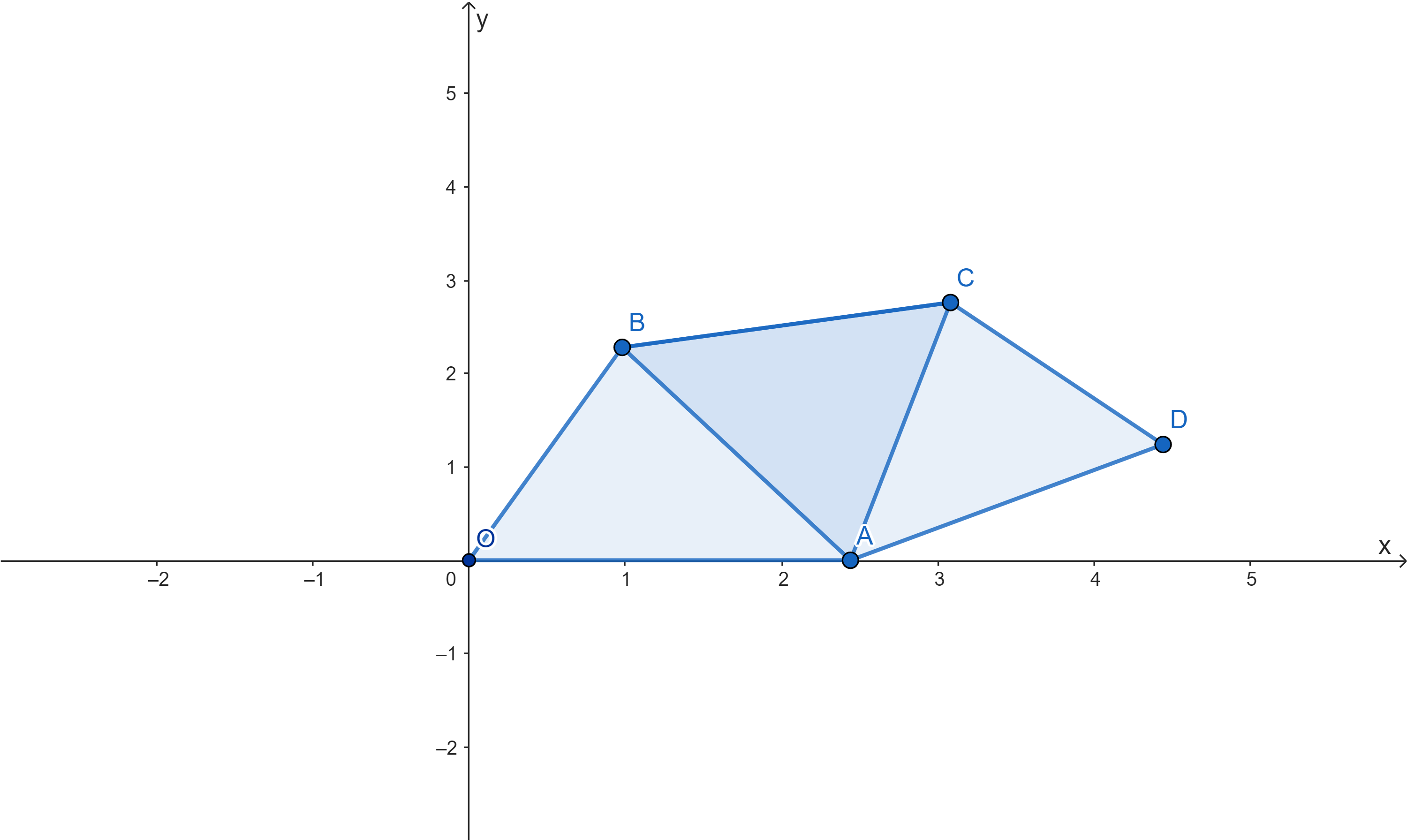}
  \caption{The pentagon $OBCDA$.
  \\
 % \txtD{Fonts in the figure are nor correct.
  %$x$-axis unnecessarily too long.}
  }
  \label{Fig2}
  \end{center}
\end{figure}

\begin{Theorem} 
\begin{equation*}  
  \|T u\|^2   = \|\nabla u\|^2_{\sii(OBCDA)} + m^2 \|u\|^2_{\sii(OBCDA)} + m \, \|\gamma u\|^2_{\sii ( \partial (OBCDA)) } .
\end{equation*}
\end{Theorem}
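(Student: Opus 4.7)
The plan is to adapt the proof of Theorem~\ref{formula} to the pentagon $OBCDA$ via a triangulation argument.

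First, Lemma~\ref{Lem.polygon} is stated for arbitrary two-dimensional polygons, so the set $\mathcal{D} := \Dom(T) \cap C_0^\infty(\Real^2 \setminus V)$ (with $V = \{O,B,C,D,A\}$) is a core of the pentagon's Dirac operator. It therefore suffices to establish the identity on $\mathcal{D}$ and then pass to the limit using $W^{1,2}$-convergence together with the continuity of the boundary trace map.

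Second, I decompose the pentagon by diagonals drawn from $O$, obtaining triangles $T_1 = OBC$, $T_2 = OCD$, $T_3 = ODA$. On each $T_j$ and for each $u \in \mathcal{D}$, I repeat the integration-by-parts computation carried out in the proof of Theorem~\ref{formula} to obtain
$$\|Tu\|_{\sii(T_j;\C^2)}^2 = \|\nabla u\|_{\sii(T_j)}^2 + m^2 \|u\|_{\sii(T_j)}^2 + \mathcal{B}_j(u),$$
where $\mathcal{B}_j(u)$ collects the one-dimensional integrals along the three edges of $T_j$. Summing the three identities, the external edges $OB, BC, CD, DA, AO$ each carry the infinite-mass boundary condition, so the same algebraic manipulations as in the proof of Theorem~\ref{formula} collapse the corresponding contributions into $m \|\gamma u\|^2_{\sii(\text{edge})}$, which in turn assemble into $m\|\gamma u\|^2_{\sii(\partial\Omega)}$. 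The internal diagonals $OC$ and $OD$ carry no boundary condition; however each is shared by two sub-triangles whose outward unit normals are opposite, and because every boundary term produced by the integration by parts depends linearly on the outward normal, the contributions from the two adjacent sides cancel exactly.

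The main technical obstacle is precisely this cancellation along the internal diagonals. In the triangle proof, each boundary integral was processed by inserting the infinite-mass relation to replace $u_2$ by a unimodular multiple of $u_1$, eventually yielding a term of the form $m|\gamma u|^2$. On an internal diagonal that substitution is unavailable, so the cancellation must be verified directly, term by term, by checking that each boundary integrand on the diagonal changes sign when the outward normal is reversed. Once this is confirmed, summing gives the desired identity on $\mathcal{D}$, and passing to the limit $u_n \to u$ in the graph norm concludes the proof. The argument then generalises transparently to an arbitrary planar polygon: fix any interior triangulation with vertices at the polygon's corners, apply the same procedure on each triangle, and invoke the same diagonal-cancellation mechanism on every internal edge.
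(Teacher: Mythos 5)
Your proposal is correct and follows essentially the same route as the paper's Appendix~\ref{A} proof: triangulate the pentagon, run the integration-by-parts computation of Theorem~\ref{formula} on each sub-triangle using the density of $\mathcal{D}$ from Lemma~\ref{Lem.polygon}, convert the external-edge terms into $m\|\gamma u\|^2$ via the infinite-mass conditions, and cancel the internal-diagonal contributions by noting that every boundary term is linear in the outward normal, which is reversed between adjacent triangles. The only (immaterial) difference is that you fan the triangulation from $O$ (triangles $OBC$, $OCD$, $ODA$) while the paper fans it from $A$ (triangles $OAB$, $ABC$, $ADC$).
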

\begin{proof} Using integration by parts 
and the density arguments described in Theorem~\ref{formula}, we compute the norm $\|Tu\|^2$ in every triangle divided. Firstly, we apply on the triangle $OAB$
%\noteD{I aligned.}%
%
\begin{equation*}
\begin{aligned}
\|Tu\|^2 &= \|m u_1 -i(\partial_1 -i\partial_2) u_2 \|^2 + \|i(\partial_1+ i\partial_2) u_1 + m u_2 \|^2\\
&= m^2 \|u\|^2 +\|\nabla u\|^2 -2\Re(\partial_1 u_2,i\partial_2 u_2) +2\Re(\partial_1 u_1, i\partial_2 u_1)\\
& \quad -2 m\Re(u_1, i(\partial_1-i\partial_2)u_2) - 2 m i\Re((\partial_1+i\partial_2)u_1, u_2)\\
&= m^2 \|u\|^2 +\|\nabla u\|^2 -2\Re(\partial_1 u_2,i\partial_2 u_2) +2\Re(\partial_1 u_1, i\partial_2 u_1)\\
& \quad -2 \Re m i\int_0^c \overline{u_1} u_2|_{x=\frac{b}{c}y}^{x=\frac{y(b-a)}{c}+a} dy -2\Re m \int_0^b \overline{u_1} u_2|_{y=0}^{y=\frac{cx}{b}} dx -2\Re m \int_b^a \overline{u_1} u_2|_{y=0}^{y=\frac{cx-ac}{b-a}} dx \\
&= m^2 \|u\|^2 +\|\nabla u\|^2 -2\Re(\partial_1 u_2,i\partial_2 u_2) +2\Re(\partial_1 u_1, i\partial_2 u_1)+ m \|u\|^2_{OA} + m \|u\|^2_{OB}\\
& \quad -2 \Re m i\int_0^c \overline{u_1} u_2|_{x=\frac{y(b-a)}{c}+a} dy -2 \Re m \int_b^a \overline{u_1} u_2|_{y=\frac{cx-ac}{b-a}} dx\\
&= m^2 \|u\|^2 +\|\nabla u\|^2 -2\Re(\partial_1 u_2,i\partial_2 u_2) +2\Re(\partial_1 u_1, i\partial_2 u_1)+ m \|u\|^2_{OA} + m \|u\|^2_{OB}\\
&\quad + \int_{AB} -2 m \Re  i \, n_{1_{AB}}\overline{u_1} u_2 - 2 m \Re  \, n_{2_{AB}} \overline{u_1} u_2.
\end{aligned}
\end{equation*}
%
%\noteD{To save space.}%
Putting 
$ I_1 := - 2\Re(\partial_1 u_2,i\partial_2 u_2)$
and $ I_2 := 2\Re(\partial_1 u_1, i\partial_2 u_1)$,
we compute
%
%\noteD{I aligned.}%
\begin{equation*}
\begin{aligned}
I_1 &= 2\Re (u_2, i \partial_1\partial_2 u_2) - 2\Re \int_0^c \overline{u_2} i \partial_2 u_2|_{x=\frac{by}{c}}^{x=\frac{y(b-a)}{c}+a} dy \\
&= -2 \Re( \partial_2 u_2, i \partial_1 u_2)- 2\Re \int_0^c \overline{u_2} i \partial_2 u_2|_{x=\frac{by}{c}}^{x=\frac{y(b-a)}{c}+a} dx \\
& \quad + 2\Re \int_0^b \overline{u_2} i \partial_1 u_2|_{y=0}^{y=\frac{cx}{b}} dx + 2\Re \int_{b}^a \overline{u_2} i \partial_2 u_2|_{y=0}^{y= \frac{cx-ac}{b-a}} dx\\
&= -I_1 - 2\Re \int_0^c \overline{u_2} i \partial_2 u_2|_{x=\frac{by}{c}}^{x=\frac{y(b-a)}{c}+a} dx \\
& \quad + 2\Re \int_0^b \overline{u_2} i \partial_1 u_2|_{y=0}^{y=\frac{cx}{b}} dx + 2\Re \int_{b}^a \overline{u_2} i \partial_2 u_2|_{y=0}^{y= \frac{cx-ac}{b-a}} dx .
\end{aligned}
\end{equation*}
Therefore, we get 
$$I_1 = -\Re i \int_0^c \overline{u_2}  \partial_2 u_2|_{x=\frac{by}{c}}^{x=\frac{y(b-a)}{c}+a} dy  + \Re i \int_0^b \overline{u_2}  \partial_1 u_2|_{y=0}^{y=\frac{cx}{b}} dx + \Re i \int_{b}^a \overline{u_2}  \partial_2 u_2|_{y=0}^{y= \frac{cx-ac}{b-a}} dx .$$
An analogous computation gives 
$$I_2 = \Re i \int_0^c \overline{u_1}  \partial_2 u_1|_{x=\frac{by}{c}}^{x=\frac{y(b-a)}{c}+a} dx  - \Re i \int_0^b \overline{u_1}  \partial_1 u_1|_{y=0}^{y=\frac{cx}{b}} dx - \Re i \int_{b}^a \overline{u_1}  \partial_2 u_1|_{y=0}^{y= \frac{cx-ac}{b-a}} dx.$$
In addition, on the side $OB$, we have $u_2 = \frac{-b-ci}{\sqrt{b^2+c^2}} u_1$ and
\begin{equation*}
\begin{aligned}
\int_0^b \overline{u_2}\partial_1 u_2 |_{y=\frac{cx}{b}} dx & = \int_0^b \frac{-b+ci}{\sqrt{b^2+c^2}} \overline{u_1} \partial_1 u_2|_{y=\frac{cx}{b}} dx = - \int_0^b \frac{-b+ci}{\sqrt{b^2+c^2}}\overline{\partial_1 u_1} \frac{-b-ci}{\sqrt{b^2+c^2}} u_1|_{y=\frac{cx}{b}} dx\\
&= - \int_0^b \overline{\partial_1 u_1} u_1|_{y=\frac{cx}{b}} dx .
\end{aligned}
\end{equation*}
As a result, we obtain
$$\Re i \int_0^b (\overline{ u_2}\partial_1u_2-\overline{ u_1} \partial_1u_1)|_{y=\frac{cx}{b}} dx =0 . $$
Similarly, we also have
$$\Re i \int_0^c (\overline{ u_2} \partial_2 u_2-\overline{ u_1} \partial_2 u_1)|_{x=\frac{by}{c}} dx =0$$
and
$$\Re i \int_0^a (\overline{ u_2} \partial_1 u_2-\overline{ u_1} \partial_1 u_1)|_{y=0} dx =0. $$
Hence, \\
$$
I_1+I_2 = -\Re i \int_0^c (\overline{u_2}  \partial_2 u_2 -\overline{u_1}  \partial_2 u_1) |_{x=\frac{y(b-a)}{c}+a} dy + \Re i \int_b^a (\overline{u_2}  \partial_1 u_2 -\overline{u_1}  \partial_1 u_1)|_{y= \frac{cx-ac}{b-a}} ,
$$
which is equivalent to
$$ I_1+I_2 =   -\Re i n_{1_{AB}} \int_{AB} (\overline{u_2}  \partial_2 u_2 -\overline{u_1}  \partial_2 u_1)                             
+ \Re i n_{2_{AB}} \int_{AB} (\overline{u_2}  \partial_1 u_2 -\overline{u_1}  \partial_1 u_1).
$$
In summary, the norm computed on the triangle $OAB$ reads
\begin{equation*}
\begin{aligned}
\|Tu\|_{L^2(OAB)}^2 &= m^2 \|u\|_{L^2(OAB)}^2 +\|\nabla u\|_{L^2(OAB)}^2+ m \|\gamma u\|^2_{OA} + m \|\gamma u\|^2_{OB} + \int_{AB} (-2 m \Re  i \, n_{1_{AB}} \overline{u_1} u_2 \\
& \qquad- 2 m \Re  \, n_{2_{AB}} \overline{u_1} u_2 )
 -\Re i n_{1_{AB}} \int_{AB} (\overline{u_2}  \partial_2 u_2 -\overline{u_1}  \partial_2 u_1)                             
+ \Re i n_{2_{AB}} \int_{AB} (\overline{u_2}  \partial_1 u_2 -\overline{u_1}  \partial_1 u_1).                 
\end{aligned}
\end{equation*}
 We suppose that $n$ is the outward unit vector in each triangle divided. This is the same notation but it is different depending on each triangle. By analogous computations, we obtain the square of the operator defined on the other triangles as follows,
\begin{equation*}
\begin{aligned}
\|Tu\|_{L^2(ABC)}^2& = m^2 \|u\|_{L^2(ABC)}^2 +\|\nabla u\|_{L^2(ABC)}^2+ m \|\gamma u\|^2_{BC}  + \int_{AB} (-2 m \Re  i \, n_{1_{AB}} \overline{u_1} u_2 - 2 m \Re  \, n_{2_{AB}} \overline{u_1} u_2)\\
& \qquad -\Re i n_{1_{AB}} \int_{AB} (\overline{u_2}  \partial_2 u_2 -\overline{u_1}  \partial_2 u_1)                             
+ \Re i n_{2_{AB}} \int_{AB} (\overline{u_2}  \partial_1 u_2 -\overline{u_1}  \partial_1 u_1) \\
& \qquad+ \int_{AC} (-2 m \Re  i \, n_{1_{AC}} \overline{u_1} u_2 - 2 m \Re  \, n_{2_{AC}} \overline{u_1} u_2)
 -\Re i n_{1_{AC}} \int_{AC} (\overline{u_2}  \partial_2 u_2 -\overline{u_1}  \partial_2 u_1)  \\                           
& \qquad+ \Re i n_{2_{AC}} \int_{AC} (\overline{u_2}  \partial_1 u_2 -\overline{u_1}  \partial_1 u_1)                
\end{aligned}
\end{equation*}
and 
\begin{equation*}
\begin{aligned}
\|Tu\|_{L^2(ADC)}^2& = m^2 \|u\|_{L^2(ADC)}^2 +\|\nabla u\|_{L^2(ADC)}^2+ m \|\gamma u\|^2_{DC} + m \|\gamma u\|^2_{DA} + \int_{AC} (-2 m \Re  i \, n_{1_{AC}} \overline{u_1} u_2 \\
&- 2 m \Re  \, n_{2_{AC}} \overline{u_1} u_2)
-\Re i n_{1_{AC}} \int_{AC} (\overline{u_2}  \partial_2 u_2 -\overline{u_1}  \partial_2 u_1)                             
+ \Re i n_{2_{AC}} \int_{AC} (\overline{u_2}  \partial_1 u_2 -\overline{u_1}  \partial_1 u_1)                .
\end{aligned}
\end{equation*}

When dividing the polygons into triangles, we deduce that the outward normal in the inner sides in the adjacent triangles are opposite then the integration computed in these sides 
will be canceled. Summarising these computations, we obtain the following formula:
\begin{equation*}
\begin{aligned}
\|Tu\|_{L^2(OBCDA)}^2 &= m^2 \|u\|_{L^2(OBCDA)}^2 +\|\nabla u\|_{L^2(OBCDA)}^2+ m \|\gamma u\|^2_{OA} + m \|\gamma u\|^2_{OB}\\
&+ m \|\gamma u\|^2_{DA} + m \|\gamma u\|^2_{CB} + + m \|\gamma u\|^2_{CD} .                 
\end{aligned}
\end{equation*}
Therefore, the proof for the square of the operator is completed.

\end{proof}

%\begin{proof}[Proof of Proposition~\ref{prop:op1Dess}
%-------------------%

\subsection*{Acknowledgment}
We are grateful to David Krej\v{c}i\v{r}{\'\i}k for useful discussions. The author was supported by the EXPRO grant No.~20-17749X
of the Czech Science Foundation.

%\newpage
%\vfill
%--------------%
% BIBLIOGRAPHY %
%--------------%
%
%\addcontentsline{toc}{section}{References}
%\bibliography{bib}
%\bibliographystyle{amsplain}
%

%\noteD{Some of the references are not cited in the text. 
%Remove them or cite them!}%

\providecommand{\bysame}{\leavevmode\hbox to3em{\hrulefill}\thinspace}
\providecommand{\MR}{\relax\ifhmode\unskip\space\fi MR }
% \MRhref is called by the amsart/book/proc definition of \MR.
\providecommand{\MRhref}[2]{%
  \href{http://www.ams.org/mathscinet-getitem?mr=#1}{#2}
}
\providecommand{\href}[2]{#2}

\end{document}